\documentclass[onefignum,onetabnum,nohypdvips]{siamart251216}
\usepackage{placeins}
\usepackage{booktabs}
\usepackage{array}
\usepackage{tabularx}

\usepackage{amsfonts}
\usepackage{amsmath}
\usepackage{amssymb}
\usepackage{graphicx}
\usepackage{tikz}
\usepackage{algorithmic}
\usepackage{hyperref}

\ifpdf
  \DeclareGraphicsExtensions{.eps,.pdf,.png,.jpg}
\else
  \DeclareGraphicsExtensions{.eps}
\fi

\newsiamremark{remark}{Remark}

\headers{Rank Compensation for Matrices and Tensor Trains}{I. El Mountasser and K. Jbilou}

\title{Low-Precision Rank Compensation for Matrices and Tensor Trains}

\author{
Ibrahim El Mountasser\thanks{
Université du Littoral Côte d'Opale, LMPA,
F-62100 Calais, France
(\email{ibrahimelmountasser@hotmail.com},
\email{jbilou@univ-littoral.fr}).
}
\and
Khalide Jbilou\footnotemark[1]
}

\usepackage{amsopn}

\DeclareMathOperator{\fl}{fl}
\newcommand{\T}{\mathsf{T}}

\makeatletter
\providecommand{\headerps@out}[1]{}
\@ifundefined{pdf@box}{\newsavebox{\pdf@box}}{}
\providecommand{\literalps@out}[1]{}
\providecommand{\pdf@addtoksx}[1]{}
\makeatother

\hypersetup{
  pdftitle={Low-Precision Rank Compensation for Matrices and Tensor Trains},
  pdfauthor={Ibrahim El Mountasser and Khalide Jbilou}
}

\begin{document}

\maketitle

\begin{abstract}
Lower numerical precision reduces storage and memory traffic but raises the perturbation floor. We study rank compensation: reinvesting saved memory in a larger approximation rank. For matrices, the singular-value error identity yields a directly testable sufficient condition requiring the additional singular component to offset the perturbation from storing the rank-augmented approximation in lower precision. On ten SuiteSparse matrices, all 100 truncation-dominated configurations (50 FP32 and 50 FP16) are certified non-increases and strict accuracy wins, with mean error ratio $0.963$ and storage ratios $58.8\%$ and $29.4\%$ relative to the FP64 baseline. FP16 failures occur only in tail-rank stress tests near the perturbation floor. At the largest resident matrix-application batch, compensated FP32 and FP16 achieve geometric-mean A100 speedups of $1.28\times$ and $2.12\times$; neither accelerates the smallest batch. For Tensor-Train (TT) approximation, we give a conditional a posteriori extension based on the measured truncation gain and rounded-core perturbation. Across three-way and six-way synthetic tests, FP32 and FP16 achieve combined accuracy--memory wins in 10 of 20 and 14 of 20 trials. On public hyperspectral tensors and FROSTT top-active subtensors, the corresponding counts are 44 of 60 and 54 of 60; four FP16 Salinas-A tail-stress cases fail. No certified TT case exceeds the FP64 error beyond numerical tolerance. Reconstruction of six public tensors yields geometric-mean compensated speedups of $1.38\times$ (FP32) and $1.94\times$ (FP16). Timings cover resident downstream kernels, not factorization, transfers, or end-to-end acceleration.
\end{abstract}

\begin{keywords}
  low-rank approximation, tensor trains, mixed precision, rank adaptation, rounding error analysis, numerical linear algebra
\end{keywords}
\begin{AMS}
  15A69, 65F55, 65G50
\end{AMS}

\section{Introduction} \label{sec:introduction}

Low-rank approximation is one of the main tools for reducing storage and arithmetic cost in numerical linear algebra.  A second, increasingly important, tool is lower-precision arithmetic.  Moving from FP64 to FP32 or FP16 can reduce storage and memory traffic and can expose faster hardware units, but the larger unit roundoff increases the perturbation floor.  A natural question is whether part of the memory saved by lower precision can be reinvested in a slightly larger rank so that the final approximation is no worse, and sometimes better, than a higher-precision lower-rank baseline.

We first address this question in the matrix setting.  The Eckart--Young low-rank approximation theorem gives an exact expression for the Frobenius-norm error of the best rank-$k$ approximation~\cite{EckartYoung1936,GolubVanLoan2013}, so the effect of moving from rank $k$ to rank $k+1$ is transparent: the squared truncation error is reduced by $\sigma_{k+1}^2$.  If the low-precision realization of the rank-$(k+1)$ approximation introduces a perturbation of size $\eta$, then the triangle inequality gives a conservative and directly testable sufficient condition for the rank increase to compensate for the lower precision.

The matrix result serves as a model problem for the tensor case.  Tensor--Train (TT) decomposition offers large storage reductions for high-dimensional arrays \cite{Oseledets2011}, but the same rank-compensation idea becomes more delicate because TT rank increments are coupled across unfoldings and low-precision rounding acts on nonunique cores.  We therefore use the matrix analysis to isolate the central memory--accuracy principle before formulating the TT extension and the additional assumptions it requires.

We derive a sufficient matrix certificate coupling a precision reduction to a
rank increment and a conditional a posteriori extension for TT representations.
Experiments on synthetic and public data evaluate accuracy, storage, and
downstream A100 performance.

\Cref{sec:related_work} reviews related work, and \cref{sec:floating_point}
introduces the numerical formats and timing protocol. The matrix analysis and
experiments appear in \cref{sec:matrix_rank_precision}.
\Cref{sec:tt_preliminaries,sec:theoretical_analysis,sec:tt_experiments} develop
and test the TT extension, followed by the fixed-memory application in
\cref{sec:hyperspectral_application}. \Cref{sec:conclusion} concludes.

\section{Related Work} \label{sec:related_work}

Mixed-precision techniques for numerical linear algebra have become an active area of research, driven by the performance differences between numerical formats on modern hardware; see Higham and Mary~\cite{HighamMary2022} and Abdelfattah et al.~\cite{AbdelfattahEtAl2021} for surveys. Much of this literature focuses on accelerating linear solvers and matrix factorizations by combining low-precision kernels with higher-precision correction or refinement, for example in three-precision iterative refinement~\cite{CarsonHigham2018}.

Recent work also treats low-rank approximation directly. Amestoy et
al.~\cite{AmestoyEtAl2023} assign precisions within low-rank blocks
according to singular-vector contributions and apply the representation to
block low-rank LU factorization. Carson and Dau\v{z}ickait\.e
\cite{CarsonDauzickaite2024} analyze a two-precision single-pass Nystr\"om
approximation and give a heuristic for selecting the lower precision. Baboulin
et al.~\cite{BaboulinEtAl2024GPU} use GPU Tensor Cores in randomized low-rank
approximation, and their later iterative-refinement framework
\cite{BaboulinEtAl2025} covers matrix and tensor decompositions, including TT.

Our focus is narrower: we analyze the memory--accuracy trade-off of a stored
compressed representation when precision savings are reinvested in rank. The
Eckart--Young theorem is classical~\cite{EckartYoung1936,GolubVanLoan2013}, as
are unitarily invariant norm extensions~\cite{Mirsky1960} and the floating-point
tools used here~\cite{Higham2002}. To the best of our knowledge, prior work has
not coupled an explicit memory-derived rank budget to a directly measurable
sufficient certificate comparing truncation gain with storage-quantization
perturbation, either for matrices or for TT approximation. The TT case requires
additional care because truncations are coupled and rounding acts on
gauge-dependent cores~\cite{Oseledets2011,HoltzRohwedderSchneider2012,Schollwock2011}.

\section{Floating-Point Arithmetic and Hardware} \label{sec:floating_point}

\subsection{Low-Precision Motivation} \label{subsubsec:motivation_low_precision}
Storing low-rank factors and TT cores in FP32 or FP16 reduces memory use and
traffic relative to FP64~\cite{HighamMary2022}. Lower precision can also provide
higher throughput on accelerators; A100 Tensor Cores support TF32, FP16, BF16,
and selected FP64 operations~\cite{NVIDIAA1002020,AbdelfattahEtAl2021}.
Reduced data movement may lower energy use, although energy is not measured
here~\cite{AbdelfattahEtAl2021,HighamMary2022}. These benefits motivate the
precision--rank trade-off studied below.

\subsection{Common Numerical Formats and NVIDIA A100 Performance} \label{subsubsec:numerical_formats}
The IEEE 754 standard specifies FP32 and FP64 arithmetic~\cite{IEEE7542019}.
TF32 and BF16 are additional accelerator formats documented in the NVIDIA A100
white paper~\cite{NVIDIAA1002020}. \Cref{tab:precision_comparison_prelim} lists
their bit allocations, unit roundoff, and A100 peak rates. Under round-to-nearest,
a binary format with $p$ significand bits has
$u=2^{-p}$~\cite{Higham2002,IEEE7542019}.

\begin{table}[!htbp]
    \centering
    \caption{Comparison of selected numerical formats. Significand bits include
    the implicit leading bit for normalized floating-point numbers. Unit
    roundoff is approximated by $2^{-p}$. A100 rates are peak dense TFLOP/s
    (TOPS for INT8); TC denotes Tensor Cores~\cite{NVIDIAA1002020}.}
    \label{tab:precision_comparison_prelim}
    \setlength{\tabcolsep}{3.5pt} 
    \begin{tabularx}{\linewidth}{@{} >{\raggedright\arraybackslash}X ccccc r @{}}
        \toprule
        Format & \shortstack{Total\\Bits} & \shortstack{Sign\\Bits} & \shortstack{Exp.\\Bits} & \shortstack{Sig.\\(Total)} & $u \approx$ & \shortstack{A100 Peak\\Perf.} \\
        \midrule
        FP64 (double)   & 64    & 1     & 11    & 53          & $10^{-16}$ & 9.7 \\
        FP64 (TC)       & 64    & 1     & 11    & 53          & $10^{-16}$ & 19.5 \\
        FP32 (single)   & 32    & 1     & 8     & 24          & $10^{-7}$  & 19.5 \\
        TF32 (TC)\textsuperscript{a}& (32) & 1 & 8 & 11          & $5 \times 10^{-4}$ & 156 \\
        BF16 (TC)       & 16    & 1     & 8     & 8           & $4 \times 10^{-3}$ & 312 \\
        FP16 (TC)       & 16    & 1     & 5     & 11          & $5 \times 10^{-4}$ & 312 \\
        INT8 (TC)       & 8     & varies& 0     & up to 8     & N/A        & 624 \\
        \bottomrule
    \end{tabularx}
    \vspace{2mm}
    \textsuperscript{a}\footnotesize{NVIDIA's TensorFloat-32 (TF32) uses FP32 storage and an 8-bit exponent. Tensor Core input conversion retains 10 fraction bits, giving 11 effective significand bits when the implicit leading bit is included and matching the convention in the table \cite{NVIDIAA1002020}. Peak performance can reach 312 TFLOP/s with sparsity when applicable.}
\end{table}

FP16 has a narrower exponent range than FP32 and BF16 and may require careful
scaling to avoid overflow or underflow. BF16 retains the FP32 exponent width but
has fewer significand bits than FP16~\cite{IEEE7542019,NVIDIAA1002020}.

\subsection{Floating-Point Error Model}
\label{subsubsec:rounding_model_constant_c}

We adopt the standard backward--error model for floating--point arithmetic
\cite{Higham2002}, in which each elementary operation is performed with a
relative error bounded by the \emph{unit roundoff}~$u$ of the working precision,
provided the result neither overflows nor underflows:
\begin{equation}\label{eq:fl_op_model}
  \operatorname{fl}\bigl(x\,\text{op}\,y\bigr)
  \;=\;
  \bigl(x\,\text{op}\,y\bigr)(1+\epsilon),
  \qquad
  |\epsilon| \le u.
\end{equation}
Values of~$u$ for the floating-point formats used below appear in
Table~\ref{tab:precision_comparison_prelim}.

\subsection{GPU Benchmark Environment and Timing Protocol}
\label{sec:gpu-protocol}

The matrix and TT benchmarks share one protocol. Their configurations appear in
\cref{sec:matrix-performance} and \cref{sec:tt-performance}, respectively. Both
ran on the NVIDIA A100 configuration in \cref{tab:gpu-environment}.
For every method, its input, factors or cores, workspaces, and output use the
method precision and remain resident on the GPU. Only the downstream kernel is
timed. Excluded from timing are host-to-device transfers and allocations; data
loading and validation; SVD or TT-SVD; rank selection; orthogonalization; and
quantization. Thus the measurements are neither transfer-inclusive nor
end-to-end latency.

TF32 paths, reduced-precision FP16 reductions, and explicit FP16 accumulation
are disabled, leaving FP32 accumulation for FP16 matrix products. Each task
begins with 30 untimed warmups. A seven-event calibration then selects the
number of inner repetitions needed to place each recorded CUDA event near
2~ms, subject to a cap of 256 repetitions; event times are divided by that
count. The five methods are independently randomized within every complete
block. Each task comprises three rounds of five blocks and 20 event samples per
method in each block, giving 300 samples and 15 paired block medians per task.
The task speedup is the median of the 15 ratios between matching FP64 and method
block medians. Dataset aggregates use geometric means, and their 95\% intervals
are percentile intervals from 10,000 case-level bootstrap resamples.

For storage and approximation-error ratios, the rounded representations are
contracted in FP64. Separate untimed output checks compare matrix output slices with
FP64 evaluation of the stored factors and sampled TT entries with an independent
core-chain evaluator. The maximum observed relative discrepancies from executing
the kernel at the method precision are $3.5\times10^{-7}$ for FP32 and
$4.5\times10^{-4}$ for FP16.

\begin{table}[!htbp]
  \centering
  \caption{Hardware, software, and timing protocol shared by the
  matrix-application and TT-reconstruction benchmarks.}
  \label{tab:gpu-environment}
  \resizebox{\linewidth}{!}{\begin{tabular}{ll}
\toprule
Item & Value \\
\midrule
GPU & NVIDIA A100-SXM4-80GB (80 GiB) \\
Driver / CUDA runtime & 580.126.20 / 12.8 \\
PyTorch / cuDNN & 2.8.0+cu128 / 9.10.2 \\
TF32 / FP16 reduced reduction & disabled / disabled \\
Warmup / timing design & 30 / 3 rounds $\times$ 5 blocks $\times$ 20 samples \\
Samples per task / tasks & 300 / 190 \\
Timing / residency & CUDA events / inputs and workspaces resident \\
Primary timing excludes & data loading, factorization, allocation, and transfer \\
\bottomrule
\end{tabular}
}
\end{table}

\section{Matrix Rank Compensation} \label{sec:matrix_rank_precision}

Let $A\in\mathbb{R}^{m\times n}$ have singular value decomposition
\[
  A = U\Sigma V^{\T}, \qquad
  \sigma_1\ge \sigma_2\ge \cdots \ge 0.
\]
Denote by
\[
  A_k = \sum_{j=1}^k \sigma_j u_jv_j^{\T}
\]
the best rank-$k$ approximation in the Frobenius norm~\cite{EckartYoung1936,GolubVanLoan2013}, and write
\[
  E_k=\|A-A_k\|_F
  =
  \left(\sum_{j>k}\sigma_j^2\right)^{1/2}.
\]
The central question is whether a low-precision realization of $A_{k+1}$ can improve upon a high-precision realization of $A_k$ while using less memory.

\subsection{Rank Compensation Bound}

The following condition follows from the Eckart--Young identity and the triangle
inequality~\cite{EckartYoung1936,GolubVanLoan2013}.

\begin{theorem}[Matrix rank compensation]\label{thm:matrix_rank_compensation}
Let $\widehat A_{k+1}$ be any low-precision realization of $A_{k+1}$ and define
\[
  \eta_{k+1}=\|A_{k+1}-\widehat A_{k+1}\|_F .
\]
Then
\begin{equation}\label{eq:matrix_universal_bound}
  \|A-\widehat A_{k+1}\|_F
  \le
  \bigl(E_k^2-\sigma_{k+1}^2\bigr)^{1/2}
  +\eta_{k+1}.
\end{equation}
In particular, if
\begin{equation}\label{eq:matrix_condition}
  E_{k+1}+\eta_{k+1}\le E_k,
\end{equation}
then
\[
  \|A-\widehat A_{k+1}\|_F \le E_k.
\]
In exact arithmetic, \eqref{eq:matrix_condition} is equivalent to the
conjunction of $\eta_{k+1}\le E_k$ and
\begin{equation}\label{eq:matrix_condition_squared}
  \sigma_{k+1}^2 \ge 2\eta_{k+1}E_k-\eta_{k+1}^2.
\end{equation}
\end{theorem}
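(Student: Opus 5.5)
The plan is to get \eqref{eq:matrix_universal_bound} from the triangle inequality and the Eckart--Young identity, and then handle the equivalence by elementary algebra on nonnegative quantities. First I insert $A_{k+1}$ into the error:
\[
\|A-\widehat A_{k+1}\|_F \le \|A-A_{k+1}\|_F + \|A_{k+1}-\widehat A_{k+1}\|_F = E_{k+1}+\eta_{k+1}.
\]
From the definition $E_k^2=\sum_{j>k}\sigma_j^2$ we have $E_{k+1}^2=E_k^2-\sigma_{k+1}^2$. Since this quantity is a sum of squares, it is nonnegative, so $E_{k+1}=(E_k^2-\sigma_{k+1}^2)^{1/2}$ is well defined. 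This gives \eqref{eq:matrix_universal_bound}. The ``in particular'' statement follows at once: if $E_{k+1}+\eta_{k+1}\le E_k$, the bound just proved yields $\|A-\widehat A_{k+1}\|_F\le E_k$. No property of the rounding model \eqref{eq:fl_op_model} is needed, because $\widehat A_{k+1}$ is arbitrary and $\eta_{k+1}$ is simply its measured perturbation.

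For the equivalence, I rewrite \eqref{eq:matrix_condition} as $E_{k+1}\le E_k-\eta_{k+1}$. The left side is nonnegative.

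For the forward direction, \eqref{eq:matrix_condition} forces $E_k-\eta_{k+1}\ge E_{k+1}\ge 0$, so $\eta_{k+1}\le E_k$. Both sides are then nonnegative, so squaring preserves the inequality and gives $E_k^2-\sigma_{k+1}^2\le E_k^2-2\eta_{k+1}E_k+\eta_{k+1}^2$. This is exactly \eqref{eq:matrix_condition_squared}.

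For the converse, assume $\eta_{k+1}\le E_k$ and \eqref{eq:matrix_condition_squared}. Reversing the algebra gives $E_{k+1}^2\le (E_k-\eta_{k+1})^2$. Since $E_k-\eta_{k+1}\ge 0$ and $E_{k+1}\ge 0$, taking square roots is monotone and yields $E_{k+1}\le E_k-\eta_{k+1}$.

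The only delicate point is the sign condition. Without $\eta_{k+1}\le E_k$, inequality \eqref{eq:matrix_condition_squared} can hold trivially while \eqref{eq:matrix_condition} fails. For example, when $\eta_{k+1}\ge 2E_k$ the right side of \eqref{eq:matrix_condition_squared} is nonpositive. So the proof must state explicitly that the conjunction is needed so that squaring is reversible. The phrase ``in exact arithmetic'' signals that this equivalence concerns the real-valued quantities, not their floating-point evaluations, and I will note that briefly.
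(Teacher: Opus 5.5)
Your proposal is correct and follows the paper's proof essentially step for step. The bound comes from the triangle inequality through $A_{k+1}$ together with the singular-value identity $E_{k+1}^2=E_k^2-\sigma_{k+1}^2$, and the equivalence follows because $E_{k+1}+\eta_{k+1}\le E_k$ forces $\eta_{k+1}\le E_k$, so squaring the nonnegative quantities $E_{k+1}$ and $E_k-\eta_{k+1}$ is reversible; your explicit treatment of this sign condition spells out what the paper compresses into ``reversing these steps.'' One small nit: your illustration with $\eta_{k+1}\ge 2E_k$ needs $\eta_{k+1}>0$ for the main condition to actually fail, since the case $E_k=\eta_{k+1}=0$ satisfies it.
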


\begin{proof}
By the singular-value error formula,
\[
  \|A-A_{k+1}\|_F^2
  =
  \sum_{j>k+1}\sigma_j^2
  =
  E_k^2-\sigma_{k+1}^2 .
\]
The triangle inequality gives
\[
  \|A-\widehat A_{k+1}\|_F
  \le
  \|A-A_{k+1}\|_F+\|A_{k+1}-\widehat A_{k+1}\|_F,
\]
which proves \eqref{eq:matrix_universal_bound}.  Since
$E_{k+1}=(E_k^2-\sigma_{k+1}^2)^{1/2}$,
\eqref{eq:matrix_condition} makes the right-hand side of
\eqref{eq:matrix_universal_bound} at most $E_k$.  Moreover,
\eqref{eq:matrix_condition} implies $\eta_{k+1}\le E_k$; after moving
$\eta_{k+1}$ to the right and squaring, one obtains
\eqref{eq:matrix_condition_squared}.  Reversing these steps proves the stated
equivalence.
\end{proof}

\noindent In numerical checks we evaluate \eqref{eq:matrix_condition} directly.
The squared form \eqref{eq:matrix_condition_squared} is retained for algebraic
interpretation and is used only together with the required condition
\(\eta_{k+1}\le E_k\).

\begin{remark}
The theorem is independent of the particular low-precision mechanism.  In the experiments below, $\widehat A_{k+1}$ is formed by rounding the stored factors $U_{k+1}$, $\Sigma_{k+1}$, and $V_{k+1}^{\T}$ to FP32 or FP16 and then reconstructing in FP64 for error measurement.  Thus the reported $\eta_{k+1}$ isolates storage quantization; it does not include error from evaluating the factor product in low-precision arithmetic.  One could instead round the dense reconstructed matrix or include low-precision evaluation error in $\widehat A_{k+1}$; the same bound applies with the corresponding value of $\eta_{k+1}$.
\end{remark}

We use the same terminology throughout the numerical sections. An
\emph{accuracy non-increase} means that the low-precision augmented
approximation has Frobenius error no larger than the FP64 baseline, whereas an
\emph{accuracy win} means a strict decrease. A \emph{memory win} means that its
exact stored representation is smaller than the FP64 baseline, and a
\emph{practical win} means both strict accuracy and memory wins. A
\emph{certified non-increase} means that the sufficient condition proves the
nonstrict accuracy statement. Certification alone proves neither a strict
accuracy win nor a memory win. The reported empirical wins use strict
comparisons; numerical tolerances are used only for certificate boundaries and
algebraic identities.

\subsection{Storage Trade-Off}

If a rank-$k$ SVD representation is stored through its factors, the leading storage count is proportional to
\[
  b\,k(m+n+1),
\]
where $b$ is the number of bits per stored scalar.  Replacing an FP64 rank-$k$ representation by a lower-precision rank-$(k+1)$ representation therefore gives the idealized storage ratio
\begin{equation}\label{eq:matrix_storage_ratio}
  \frac{\mathrm{storage}_{\mathrm{low}}(k+1)}
       {\mathrm{storage}_{\mathrm{FP64}}(k)}
  =
  \frac{b_{\mathrm{low}}}{64}\frac{k+1}{k}.
\end{equation}
Thus FP32 still saves memory when $k>1$, and FP16 saves memory for all positive ranks.  The numerical question is whether the augmented tail plus the storage perturbation satisfies \(E_{k+1}+\eta_{k+1}\le E_k\), as required by \cref{thm:matrix_rank_compensation}.

\subsection{Public SuiteSparse Matrix Experiments}\label{subsec:matrix_experiments}

We tested the bound on ten matrices from the SuiteSparse Matrix Collection \cite{DavisHu2011,KolodziejEtAl2019}, all from the original Harwell--Boeing (\texttt{HB}) group~\cite{DuffGrimesLewis1989}: \texttt{ash85}, \texttt{ash219}, \texttt{ash331}, \texttt{ash608}, \texttt{494\_bus}, \texttt{662\_bus}, \texttt{685\_bus}, \texttt{1138\_bus}, \texttt{bcspwr05}, and \texttt{bcspwr06}. These small-to-medium matrices span least-squares and power-network problems and permit dense FP64 SVDs. For each matrix, we formed the rank-$k$ baseline and rounded the rank-$(k+1)$ factors ($U_{k+1}$, $\Sigma_{k+1}$, and $V_{k+1}^{\T}$) individually to FP32 or FP16 to obtain $\widehat A_{k+1}$. Reconstruction and error measurement used FP64. Unless otherwise noted, $E_k$, the new error, and $\eta$ are relative Frobenius norms. The default experiment used ranks $k\in\{2,5,10,20,40\}$; selected smaller matrices were also tested at larger ranks near the FP16 perturbation floor.

\begin{table}[!htbp]
\centering
\caption{Summary of matrix rank-compensation experiments.  A ``certified'' case is one in which \cref{eq:matrix_condition} holds.  The error ratio is $\|A-\widehat A_{k+1}\|_F/E_k$, so values below one mean that low precision with rank $k+1$ improves on the FP64 rank-$k$ baseline.}
\label{tab:matrix_results_summary}
\resizebox{\textwidth}{!}{%
\begin{tabular}{llrrrrr}
\toprule
Regime & Format & Cases & Certified & Wins & Avg. error ratio & Avg. storage ratio \\
\midrule
Default & FP32 & 50 & 50 & 50 & 0.963 & 0.588 \\
Default & FP16 & 50 & 50 & 50 & 0.963 & 0.294 \\
Tail stress & FP32 & 17 & 17 & 17 & 0.953 & 0.510 \\
Tail stress & FP16 & 17 & 10 & 15 & 0.976 & 0.255 \\
\bottomrule
\end{tabular}}
\end{table}

The default regime is truncation dominated: all FP32 and FP16 rank-$(k+1)$
approximations in \cref{tab:matrix_results_summary} strictly
improve on the FP64 rank-$k$ baseline, and all cases satisfy the non-increase
certificate \eqref{eq:matrix_condition}.

In the tail-rank tests, FP32 improves accuracy in all cases, but FP16 loses
accuracy for \texttt{494\_bus} at $k=300$ and $k=400$. In absolute terms, at
$k=300$, the FP64 baseline error is $E_{300} \approx 98.83$, and the FP16
perturbation for rank $301$ is $\eta \approx 21.81$. Since the remaining
singular tail does not decrease enough to offset $\eta$, the condition
\eqref{eq:matrix_condition} fails and the FP16 error increases to $100.15$.
Similarly at $k=400$, $E_{400} \approx 24.21$, and the FP16 error increases to
$32.25$. The sufficient condition fails in both cases.

At the largest tail-stress ranks, the SVD factors exceed dense storage. For
example, \texttt{494\_bus} has \(494^2=244{,}036\) dense entries, while a
rank-\(k\) SVD factorization stores \(k(494+494+1)=989k\) scalars. Thus the
SVD-factor representation exceeds dense storage for \(k\ge 247\). The storage
percentages reported for the tail-stress tests are relative to the FP64 rank-\(k\)
factor baseline, not to dense storage.

Condition \eqref{eq:matrix_condition} is sufficient but not necessary. Because it rests on the triangle inequality, it may miss some empirical accuracy improvements. For instance, \texttt{494\_bus} at $k=60 \to 61$ yields a ratio of $0.986$ but is not certified. The condition guarantees non-increase, while strict improvements may also occur beyond the certification boundary.

For the \texttt{ash85} matrix even at \(k=80\), FP16 rank \(81\)
remains certified. In the relative units used in the tail-stress experiment,
the baseline error \(6.73\times 10^{-3}\) decreases to \(4.46\times 10^{-3}\)
while the perturbation is \(\eta=3.65\times 10^{-4}\); the gain from the 81st
singular value still outweighs the FP16 perturbation.
The complete FP16 tail-stress grid is reported in
\cref{tab:tail_stress_detailed}.

\begin{table}[!htbp]
\centering
\caption{Detailed FP16 results for the tail-rank stress experiment. The error ratio is $\|A-\widehat A_{k+1}\|_F/E_k$, and values below 1 indicate that the low-precision model improves on the baseline. The matrix condition provides a sufficient certificate; non-certified cases may still be empirical wins.}
\label{tab:tail_stress_detailed}
\resizebox{\textwidth}{!}{%
\begin{tabular}{lrrrrrrr}
\toprule
Matrix & $k \to k+1$ & $E_k$ & $\eta$ & New Error & Ratio & Storage & Cert.? \\
\midrule
\texttt{ash85} & 20 $\to$ 21 & 4.27e-01 & 3.36e-04 & 4.13e-01 & 0.966 & 26.2\% & Yes \\
\texttt{ash85} & 40 $\to$ 41 & 2.34e-01 & 3.59e-04 & 2.27e-01 & 0.969 & 25.6\% & Yes \\
\texttt{ash85} & 60 $\to$ 61 & 1.01e-01 & 3.64e-04 & 9.54e-02 & 0.941 & 25.4\% & Yes \\
\texttt{ash85} & 80 $\to$ 81 & 6.73e-03 & 3.65e-04 & 4.46e-03 & 0.663 & 25.3\% & Yes \\
\texttt{ash219} & 20 $\to$ 21 & 7.62e-01 & 2.29e-04 & 7.52e-01 & 0.986 & 26.2\% & Yes \\
\texttt{ash219} & 40 $\to$ 41 & 5.68e-01 & 2.99e-04 & 5.58e-01 & 0.984 & 25.6\% & Yes \\
\texttt{ash219} & 60 $\to$ 61 & 3.76e-01 & 3.39e-04 & 3.66e-01 & 0.973 & 25.4\% & Yes \\
\texttt{ash219} & 80 $\to$ 81 & 1.28e-01 & 3.59e-04 & 1.12e-01 & 0.882 & 25.3\% & Yes \\
\texttt{494\_bus} & 20 $\to$ 21 & 6.59e-02 & 3.78e-04 & 6.29e-02 & 0.955 & 26.2\% & Yes \\
\texttt{494\_bus} & 40 $\to$ 41 & 3.78e-02 & 3.79e-04 & 3.71e-02 & 0.980 & 25.6\% & Yes \\
\texttt{494\_bus} & 60 $\to$ 61 & 2.58e-02 & 3.79e-04 & 2.55e-02 & 0.986 & 25.4\% & No \\
\texttt{494\_bus} & 80 $\to$ 81 & 1.95e-02 & 3.79e-04 & 1.93e-02 & 0.986 & 25.3\% & No \\
\texttt{494\_bus} & 100 $\to$ 101 & 1.51e-02 & 3.79e-04 & 1.49e-02 & 0.988 & 25.2\% & No \\
\texttt{494\_bus} & 150 $\to$ 151 & 8.36e-03 & 3.79e-04 & 8.28e-03 & 0.990 & 25.2\% & No \\
\texttt{494\_bus} & 200 $\to$ 201 & 5.06e-03 & 3.79e-04 & 5.03e-03 & 0.993 & 25.1\% & No \\
\texttt{494\_bus} & 300 $\to$ 301 & 1.72e-03 & 3.79e-04 & 1.74e-03 & 1.013 & 25.1\% & No \\
\texttt{494\_bus} & 400 $\to$ 401 & 4.21e-04 & 3.79e-04 & 5.61e-04 & 1.332 & 25.1\% & No \\
\bottomrule
\end{tabular}}
\end{table}

\FloatBarrier

\subsection{Downstream Matrix-Application Performance}
\label{sec:matrix-performance}

We next time matrix application with the corresponding resident factors under
the A100 protocol in \cref{sec:gpu-protocol}.
For a matrix approximation
$A_r=U_r\operatorname{diag}(s_r)V_r^{\T}$ and a batch
$X\in\mathbb{R}^{b\times n}$, the kernel evaluates
\begin{equation}
  Y=\bigl[(XV_r)\operatorname{diag}(s_r)\bigr]U_r^{\T},
  \label{eq:gpu_matrix_apply}
\end{equation}
using two preallocated matrix multiplications and an in-place column scaling.
The benchmark uses all ten SuiteSparse matrices in
\cref{subsec:matrix_experiments}, base rank 40, augmented rank 41, and
$b\in\{256,2048,8192\}$. A seeded standard-normal input is shared across
methods after every row is scaled to unit Euclidean norm, fixing the input range
instead of allowing arbitrary query scale to determine FP16 representability.

The FP64 rank-40 application is the baseline. Same-rank FP32 and FP16 controls
separate the effect of precision from the cost of rank augmentation, while the
rank-compensated FP32 and FP16 methods apply the rank-41 factors. Thus the
storage and approximation-error ratios in \cref{tab:matrix-speed} refer to the
same stored factors whose accuracy--memory trade-off was evaluated above.
Agreement between low-precision kernel output and FP64 evaluation of the stored
factors is assessed as described in \cref{sec:gpu-protocol}.

\begin{table}[!htbp]
  \centering
  \caption{NVIDIA A100 resident matrix-application timings at batch
  8192 for FP64 rank 40 and rank-compensated rank 41. Results are geometric
  means over the ten SuiteSparse matrices. Time is the
  geometric mean of per-task medians, and speedups use paired block medians;
  95\% intervals quantify variation across matrices. Storage and approximation
  error are ratios to the matching FP64 rank-40 representation, with error
  evaluated by FP64 contraction of the stored factors.}
  \label{tab:matrix-speed}
  \small
  \begin{tabular}{lrrrr}
\toprule
Method & Time (ms) & Speedup [95\% CI] & Storage & Error \\
\midrule
FP64 baseline & 0.102 & 1.00 [1.00, 1.00] & 1.0000 & 1.000 \\
FP32 same rank & 0.075 & 1.35 [1.31, 1.39] & 0.5000 & 1.000 \\
FP16 same rank & 0.044 & 2.32 [2.01, 2.67] & 0.2500 & 1.000 \\
FP32 rank-comp. & 0.080 & 1.28 [1.24, 1.31] & 0.5125 & 0.976 \\
FP16 rank-comp. & 0.048 & 2.12 [1.91, 2.31] & 0.2563 & 0.976 \\
\bottomrule
\end{tabular}

\end{table}

At batch 8192, the rank-compensated FP32 and FP16 methods attain
geometric-mean speedups of $1.28\times$ and $2.12\times$, respectively, while
reducing factor storage to $0.5125$ and $0.2563$ of the FP64 rank-40 baseline.
Their geometric-mean error ratio is $0.976$. The same-rank FP32 and FP16
controls reach $1.35\times$ and $2.32\times$, respectively. This comparison
isolates the modest runtime cost of adding one singular component.

Speedup depends strongly on batch size. At batch
256, the compensated FP32 and FP16 speedup factors are $0.99$ and $0.94$, so
neither method accelerates and fixed overhead dominates these small kernels.
At batch 2048, the factors rise to $1.04$ and $1.36$, and the largest batch
produces the gains reported above.

\begin{figure}[!htbp]
  \centering
  \includegraphics[width=0.76\linewidth]{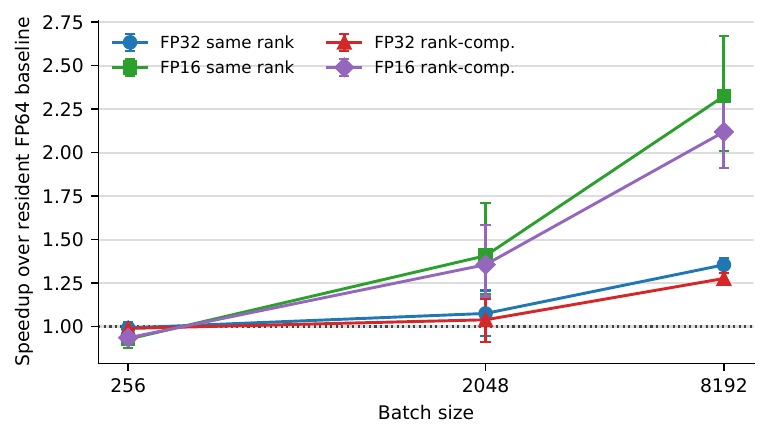}
  \caption{Geometric-mean NVIDIA A100 resident matrix-application speedup over
  the ten SuiteSparse matrices as batch size increases, using FP64 rank 40 and
  rank-compensated rank 41. Error bars are 95\% case-level
  bootstrap intervals. The rank-compensated methods become faster only when the
  workload is large enough to amortize fixed overhead.}
  \label{fig:matrix-speedup}
\end{figure}

At sufficiently large batches, the matrix accuracy--memory trade-off is retained
while downstream application becomes faster. These resident-kernel timings
exclude SVD construction, allocation, data transfer, and end-to-end latency.

\FloatBarrier

\section{Tensor-Train Preliminaries} \label{sec:tt_preliminaries}

We recall the TT definitions and stability properties used in the
rank-compensation analysis.

\subsection{\texorpdfstring{Tensor--Train}{Tensor-Train} (TT) Decomposition} \label{subsec:tt_decomposition}

A \(d\)-way tensor \(T \in \mathbb{R}^{n_1 \times n_2 \times \dots \times n_d}\) is a multi-dimensional array.
Its uncompressed storage requirement \(\prod_{\ell=1}^d n_\ell\) grows exponentially with the dimension~\(d\), a manifestation of the \emph{curse of dimensionality}.
For general background on tensor decompositions, see~\cite{KoldaBader2009}; for the TT format specifically, see~\cite{Oseledets2011}.
The Tensor--Train (TT) decomposition~\cite{Oseledets2011}, also known in quantum physics as the Matrix Product State (MPS)~\cite{Schollwock2011}, offers a concise representation for tensors that exhibit suitable low-rank structure.

In TT format the tensor is factorized into three-way \emph{cores} \(G_\ell \in \mathbb{R}^{R_{\ell-1}\times n_\ell \times R_\ell}\) for \(\ell = 1,\dots,d\).
Writing \(G_\ell(i_\ell) \in \mathbb{R}^{R_{\ell-1}\times R_\ell}\) for a slice obtained by fixing the physical index \(i_\ell\), entries of \(T\) are reconstructed via \(T(i_1,\dots,i_d)\;=\;G_1(i_1)\,G_2(i_2)\cdots G_d(i_d)\), with boundary ranks \(R_0=R_d=1\).
The storage cost is \(\sum_{\ell=1}^d R_{\ell-1} n_\ell R_\ell = O(d n R_{\max}^2)\) when \(n_\ell = n\) and \(R_{\max} = \max_\ell R_\ell\); thus, if the TT-ranks are moderate, the scaling is linear in \(d\).
The corresponding core-chain structure and its physical and rank indices are
shown in \cref{fig:tt_decomposition_diagram}.

\begin{figure}[htbp]
  \centering
  \begin{tikzpicture}[
      core/.style={draw, minimum width=1.05cm, minimum height=0.72cm, line width=1pt},
      edge/.style={line width=1pt},
      every node/.style={font=\small}
    ]
    \node[core] (g1) at (0,0) {$G_1$};
    \node[core] (g2) at (2.0,0) {$G_2$};
    \node (dots) at (3.65,0) {$\cdots$};
    \node[core] (gd) at (5.3,0) {$G_d$};
    \draw[edge] (-1.25,0) -- (g1.west);
    \node[anchor=south, yshift=2pt] at (-1.25,0) {$R_0=1$};
    \draw[edge] (g1.east) -- node[above] {$R_1$} (g2.west);
    \draw[edge] (g2.east) -- node[above] {$R_2$} (dots.west);
    \draw[edge] (dots.east) -- node[above] {$R_{d-1}$} (gd.west);
    \draw[edge] (gd.east) -- (6.55,0);
    \node[anchor=south, yshift=2pt] at (6.55,0) {$R_d=1$};
    \draw[edge] (g1.south) -- ++(0,-0.75) node[below] {$i_1$};
    \draw[edge] (g2.south) -- ++(0,-0.75) node[below] {$i_2$};
    \draw[edge] (gd.south) -- ++(0,-0.75) node[below] {$i_d$};
  \end{tikzpicture}
  \caption{Tensor--Train representation. Each box is a core
  $G_\ell\in\mathbb{R}^{R_{\ell-1}\times n_\ell\times R_\ell}$; vertical
  edges denote physical indices $i_\ell$, horizontal edges denote TT-rank
  indices $R_\ell$, and $R_0=R_d=1$.}
  \label{fig:tt_decomposition_diagram}
\end{figure}

\paragraph{Frobenius norm}
We measure tensor errors in the Frobenius norm,
\begin{equation}
\|T\|_F =
\left(
  \sum_{i_1=1}^{n_1} \cdots \sum_{i_d=1}^{n_d}
  |T(i_1,\dots,i_d)|^2
\right)^{1/2}
\end{equation}

\paragraph{Low-rank approximation and error bound}
Let \(T_R\) be the TT approximation obtained by the \emph{TT-SVD} algorithm of Oseledets~\cite{Oseledets2011}, which sequentially applies truncated SVDs and retains \(R_\ell\) dominant singular values at each step. For the \(\ell\)th unfolding \(T^{\langle \ell\rangle}\), let \(\varepsilon_\ell\) be the Frobenius residual of its best rank-\(R_\ell\) matrix approximation. The accumulated unfolding-error bound for TT-SVD is~\cite[Theorem~2.2]{Oseledets2011}
\begin{equation}
E =\; \|T - T_R\|_F
    \;\le\;
    \Bigl(\,
      \sum_{\ell=1}^{d-1}
        \varepsilon_\ell^2
    \Bigr)^{1/2},
\label{eq:tt_svd_error}
\end{equation}
Because TT-SVD truncations are sequential, the unfolding errors are coupled.
The TT certificate therefore uses the realized gain \(S_{\mathrm{TT}}\), rather
than an exact analogue of the matrix singular-value identity.

\subsection{A basic building block}

\begin{lemma}[Componentwise forward error for a row--matrix product]\label{lem:dot}
Let \(x\in \mathbb{R}^{1\times m}\) and
\(A\in \mathbb{R}^{m\times n}\). Suppose each entry of
\(\widehat y=\fl(xA)\) is computed as a standard floating-point
dot product and \(m u<1\). Then
\[
\widehat y = xA + \Delta y,
\qquad
|\Delta y| \le \gamma_m |x|\,|A|,
\qquad
\gamma_m=\frac{m u}{1-m u},
\]
where the absolute values and inequality are componentwise.
\end{lemma}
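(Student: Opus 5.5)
The plan is to reduce the statement to the classical forward-error bound for a single inner product and then apply it column by column. Fix a column index \(j\in\{1,\dots,n\}\). The \(j\)th entry of \(xA\) is the inner product \(s_j=\sum_{i=1}^m x_i a_{ij}\), and by hypothesis \(\widehat y_j\) is obtained by evaluating this sum in floating point: first the \(m\) products \(x_i a_{ij}\), then \(m-1\) additions in some fixed order. Since the componentwise bound on \(\Delta y\) is a statement about each entry separately, it is enough to prove \(|\widehat y_j - s_j|\le \gamma_m\sum_i |x_i|\,|a_{ij}|\) for every \(j\).

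For a fixed \(j\), I would apply the model \eqref{eq:fl_op_model} to every multiplication and every addition, assuming no overflow or underflow as that model requires. For recursive summation this gives
\[
\widehat y_j=\sum_{i=1}^m x_i a_{ij}\prod_{t=1}^{m_i}(1+\delta_{i,t}),\qquad |\delta_{i,t}|\le u,\quad m_i\le m .
\]
Each product picks up one factor from its multiplication and at most \(m-1\) factors from the additions it passes through. In the worst case the first term passes through all \(m-1\) additions, so \(m_i\le m\). For any other fixed summation order, such as a pairwise or tree reduction on the GPU, the same count holds: every term passes through at most \(m-1\) additions on its path to the root.

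The key step is the standard lemma from Higham \cite{Higham2002}: if \(|\delta_t|\le u\) and \(ku<1\), then \(\prod_{t=1}^{k}(1+\delta_t)=1+\theta_k\) with \(|\theta_k|\le\gamma_k\). I would prove it by induction on \(k\), or cite it directly. Two small adjustments are needed here. First, a product with fewer than \(m\) factors can be padded with factors \(\delta=0\), so every term can be written with exactly \(m\) factors. Second, \(\gamma_k\) is monotone in \(k\), so \(|\theta_{m_i}|\le\gamma_m\) for all \(i\). The hypothesis \(mu<1\) guarantees that \(\gamma_m\) is finite and positive.

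Putting these together gives \(\widehat y_j - s_j=\sum_i x_i a_{ij}\,\theta^{(i)}\). The triangle inequality then yields \(|\Delta y_j|\le \gamma_m\sum_i |x_i||a_{ij}| = \gamma_m(|x|\,|A|)_j\), which is exactly the claim. The main obstacle is the careful counting of rounding factors, so that the worst term really sees at most \(m\) of them, together with the induction behind the product-to-\(\gamma_k\) bound. Everything else is bookkeeping.
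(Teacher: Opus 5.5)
Your proposal is correct and follows the standard inner-product rounding-error analysis that the paper relies on; the paper gives no derivation of its own and only cites Higham~\cite{Higham2002}. That argument is what you reproduce: apply model \eqref{eq:fl_op_model} to each operation, use the lemma $\prod_{t=1}^{k}(1+\delta_t)=1+\theta_k$ with $|\theta_k|\le\gamma_k$ (padded to $k=m$ by monotonicity), and finish with the triangle inequality, column by column.
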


\noindent This is the standard componentwise forward-error bound for dot
products~\cite{Higham2002}.

\subsection{Numerical stability of TT evaluations}

In TT contractions, the local dot-product perturbations are naturally controlled in an absolute/componentwise sense. Repeated application of Lemma~\ref{lem:dot} gives bounds involving products of absolute core slices rather than a simple relative error bound for the final tensor entry. In particular, a conservative perturbation scale has the form
\[
|\widehat T(i_1,\ldots,i_d)-T(i_1,\ldots,i_d)|
\lesssim
\gamma\, |G_1(i_1)|\,|G_2(i_2)|\cdots |G_d(i_d)|,
\]
where \(\gamma\) depends on the contraction order, the TT ranks, and the working
precision. This bound concerns low-precision contraction. The experiments
instead reconstruct rounded cores in FP64, so their certificates measure
storage quantization alone. Entrywise relative bounds require assumptions that
exclude severe cancellation.


\section{Rank-Augmented Tensor Trains}
\label{sec:theoretical_analysis}

The matrix theorem separates the argument into two quantities: a truncation
gain, measured by $\sigma_{k+1}^2$, and a low-precision perturbation, measured by
$\eta_{k+1}$. The TT analysis uses the same separation, but both quantities are
more delicate. Increasing all TT ranks by one does not simply restore one
independent singular component at each interface, because TT-SVD truncations are
sequential and coupled. Moreover, rounding TT cores is gauge
dependent~\cite{HoltzRohwedderSchneider2012,Schollwock2011}: two equivalent TT
representations of the same tensor can have different componentwise rounding
behavior.

Since TT-SVD is sequential and quasi-optimal rather than globally
optimal~\cite[Corollary~2.4]{Oseledets2011}, the theorem uses the realized gain
\(S_{\mathrm{TT}}\) and assumes that it is nonnegative.

\begin{theorem}[Conditional TT rank compensation]\label{thm:tt_conditional}
Let $T$ be a tensor and let $T_R$ be a TT approximation of multi-rank
$R=(R_1,\ldots,R_{d-1})$ with
\[
  E_R=\|T-T_R\|_F .
\]
Let \(R^+=(R_1^+,\ldots,R_{d-1}^+)\) be an augmented rank vector with
\(R_j^+\ge R_j\) for each \(j\), and let \(T_{R^+}\) be a rank-\(R^+\)
TT approximation of \(T\).  Define
\[
  E_{R^+}=\|T-T_{R^+}\|_F
\]
and the realized truncation gain
\begin{equation}\label{eq:tt_gain_assumption}
  S_{\mathrm{TT}} := E_R^2-E_{R^+}^2,
\end{equation}
and assume $S_{\mathrm{TT}}\ge 0$.  Let $\widehat T_{R^+}$ be a low-precision realization of $T_{R^+}$ and define
\[
  \eta_{\mathrm{TT}}=\|T_{R^+}-\widehat T_{R^+}\|_F .
\]
Then
\begin{equation}\label{eq:tt_universal}
  \|T-\widehat T_{R^+}\|_F
  \le
  \bigl(E_R^2-S_{\mathrm{TT}}\bigr)^{1/2}
  +\eta_{\mathrm{TT}}.
\end{equation}
In particular, if
\begin{equation}\label{eq:tt_condition}
  E_{R^+}+\eta_{\mathrm{TT}}\le E_R,
\end{equation}
then
\[
  \|T-\widehat T_{R^+}\|_F \le E_R .
\]
In exact arithmetic, \eqref{eq:tt_condition} is equivalent to the conjunction
of $\eta_{\mathrm{TT}}\le E_R$ and
\begin{equation}\label{eq:tt_condition_squared}
  S_{\mathrm{TT}}\ge 2\eta_{\mathrm{TT}}E_R-\eta_{\mathrm{TT}}^2.
\end{equation}
\end{theorem}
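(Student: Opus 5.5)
The argument mirrors the proof of \cref{thm:matrix_rank_compensation}, with the singular-value identity replaced by the definition of the realized gain. We never invoke an Eckart--Young-type statement for TT approximations; only the definition \eqref{eq:tt_gain_assumption} is used.

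\textbf{Step 1: rewrite the augmented error.} From \eqref{eq:tt_gain_assumption},
\[
E_{R^+}^2 = E_R^2 - S_{\mathrm{TT}}.
\]
Since $E_{R^+}$ is a norm, it is nonnegative, so
\[
E_{R^+} = \bigl(E_R^2-S_{\mathrm{TT}}\bigr)^{1/2}.
\]
The quantity under the root equals $E_{R^+}^2\ge 0$, so it is automatically nonnegative. The hypothesis $S_{\mathrm{TT}}\ge 0$ is not needed for this step; it only records that the augmentation does not worsen the truncation error.

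\textbf{Step 2: triangle inequality.} Insert and subtract $T_{R^+}$:
\[
\|T-\widehat T_{R^+}\|_F\le \|T-T_{R^+}\|_F+\|T_{R^+}-\widehat T_{R^+}\|_F = E_{R^+}+\eta_{\mathrm{TT}}.
\]
Combined with Step 1, this gives \eqref{eq:tt_universal}. If \eqref{eq:tt_condition} holds, the right-hand side is at most $E_R$, which yields the conclusion $\|T-\widehat T_{R^+}\|_F\le E_R$.

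\textbf{Step 3: the equivalence.} For the forward direction, assume \eqref{eq:tt_condition}. Since $E_{R^+}\ge 0$, it follows that $\eta_{\mathrm{TT}}\le E_R$. The inequality $E_{R^+}\le E_R-\eta_{\mathrm{TT}}$ then has both sides nonnegative, so squaring preserves it:
\[
E_R^2-S_{\mathrm{TT}}\le E_R^2-2\eta_{\mathrm{TT}}E_R+\eta_{\mathrm{TT}}^2,
\]
which is \eqref{eq:tt_condition_squared}.

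For the converse, assume $\eta_{\mathrm{TT}}\le E_R$ and \eqref{eq:tt_condition_squared}. Then
\[
E_{R^+}^2\le (E_R-\eta_{\mathrm{TT}})^2.
\]
Both $E_{R^+}$ and $E_R-\eta_{\mathrm{TT}}$ are nonnegative, so taking square roots recovers \eqref{eq:tt_condition}.

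The only delicate point is this sign bookkeeping when squaring and taking square roots. It is exactly why the side condition $\eta_{\mathrm{TT}}\le E_R$ must appear in the equivalence. Without it, \eqref{eq:tt_condition_squared} can hold, for instance when $\eta_{\mathrm{TT}}>2E_R$, while \eqref{eq:tt_condition} fails.

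Nothing in this argument uses TT structure, gauge choice, or the quasi-optimality bound \eqref{eq:tt_svd_error}. All of that is absorbed into the measured quantities $S_{\mathrm{TT}}$ and $\eta_{\mathrm{TT}}$, which is what makes the result an a posteriori statement.
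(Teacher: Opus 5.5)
Your proposal is correct and takes the same route as the paper. It uses the triangle inequality through $T_{R^+}$, the identity $E_{R^+}=(E_R^2-S_{\mathrm{TT}})^{1/2}$ taken from the definition of the realized gain, and the rearrange-and-square argument for the equivalence, where your explicit sign bookkeeping (using $E_{R^+}\ge 0$ to get $\eta_{\mathrm{TT}}\le E_R$, and nonnegativity of $E_R-\eta_{\mathrm{TT}}$ when taking square roots) fills in what the paper defers to the matrix case.
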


\begin{proof}
The proof is the same perturbation argument as in the matrix case.  By the triangle inequality and \eqref{eq:tt_gain_assumption},
\[
  \|T-\widehat T_{R^+}\|_F
  \le
  \|T-T_{R^+}\|_F+\|T_{R^+}-\widehat T_{R^+}\|_F
  \le
  \bigl(E_R^2-S_{\mathrm{TT}}\bigr)^{1/2}
  +\eta_{\mathrm{TT}}.
\]
Because $(E_R^2-S_{\mathrm{TT}})^{1/2}=E_{R^+}$,
\eqref{eq:tt_condition} makes this upper bound at most $E_R$. The equivalence
with \eqref{eq:tt_condition_squared} follows by the same rearrangement and
squaring argument as in the proof of \cref{thm:matrix_rank_compensation}.
\end{proof}

\noindent For numerical evaluation, we use \eqref{eq:tt_condition} directly,
avoiding the subtraction of nearly equal squared errors required to compute
\(S_{\mathrm{TT}}\) and evaluate \eqref{eq:tt_condition_squared}.

The TT result is a posteriori, unlike the matrix certificate. The matrix gain
$\sigma_{k+1}^2$ is known before rounding, whereas $S_{\mathrm{TT}}$ must be
measured after computing the augmented FP64 approximation. Thus
\cref{thm:tt_conditional} tests a computed representation but does not select
ranks in advance. An a priori lower bound for $S_{\mathrm{TT}}$ remains open.

For the 128 TT rank augmentations tested below,
\(S_{\mathrm{TT}}\ge0\) held to numerical tolerance. Each augmentation has FP32
and FP16 rounded-core evaluations, giving 256 precision-specific rows.

\subsection{Interpreting the TT Quantities}\label{subsec:discussion_error_bounds}

The practical comparison involves a truncation gain, a rounding perturbation,
and a storage constraint.

\paragraph{Truncation gain}\leavevmode\par
In the matrix case, the gain from rank $k$ to $k+1$ is exactly $\sigma_{k+1}^2$.  In TT format, a natural additive diagnostic for a general augmented vector is
\[
  S_{\mathrm{diag}}^{(R,R^+)}
  =
  \sum_{j=1}^{d-1}
  \sum_{\ell=1}^{R_j^+-R_j}
  \left(\sigma_{R_j+\ell}^{(j,+)}\right)^2,
\]
where $\sigma_q^{(j,+)}$ denotes the local singular values encountered at the
$j$th sequential unfolding in the augmented-rank TT-SVD run. The terms at
positions $R_j+1$ through $R_j^+$ lie beyond the base-rank cutoff in that
augmented run. They are not, in general, independently identifiable components
newly admitted relative to the base run, because the earlier sequential
truncations and hence later unfoldings can differ between the two runs. When
$R_j^+=R_j+\delta$ at every interface, we abbreviate this quantity as
$S_{\mathrm{diag}}^{(\delta)}$. For nominal uniform schedules with capped
ranks, the upper limit remains the actual increment \(R_j^+-R_j\). In the
controlled experiments below, $S_{\mathrm{diag}}^{(\delta)}$ does not
accurately track the realized gain $S_{\mathrm{TT}}$, suggesting that this
additive local diagnostic may be unreliable as a quantitative predictor.

\paragraph{Low-precision perturbation}\leavevmode\par
The perturbation $\eta_{\mathrm{TT}}$ should be measured after choosing and
reporting a controlled representation of the TT cores, such as a
left-orthogonal or right-orthogonal gauge. Such an orthogonal form is not a
unique canonical representation: residual orthogonal gauge transformations can
remain. It nevertheless controls arbitrary inter-core scaling. This is
important because inserting $MM^{-1}$ between adjacent cores leaves the
represented tensor unchanged but may change the effect of rounding individual
entries. Controlled orthogonal gauges are standard in TT/MPS
algorithms~\cite{Oseledets2011,HoltzRohwedderSchneider2012,Schollwock2011}. A
practical TT algorithm should therefore round only after bringing the
representation into a reported controlled gauge. In the experiments,
$\widehat T_{R^+}$ is the tensor represented by rounded stored cores but
reconstructed in FP64, so $\eta_{\mathrm{TT}}$ measures storage quantization
and excludes low-precision contraction arithmetic. Orthogonalizing a TT with
$d$ cores and maximum rank $R$ requires $O(d n R^3)$ operations. This overhead
is standard in TT algebra, but it should be accounted for in applications that
seek runtime gains in addition to storage reduction.

\paragraph{Storage trade-off}\leavevmode\par
For comparable mode sizes $n_j\approx n$ and ranks $R_j\approx R$, TT storage is proportional to $b\,d\,nR^2$, where $b$ is the number of bits per scalar.  Moving from FP64 rank $R$ to lower precision rank $R+\delta$ gives the approximate ratio
\begin{equation}\label{eq:tt_storage_ratio}
  \frac{\mathrm{storage}_{\mathrm{low}}(R+\delta)}
       {\mathrm{storage}_{\mathrm{FP64}}(R)}
  \approx
  \frac{b_{\mathrm{low}}}{64}
  \left(1+\frac{\delta}{R}\right)^2 .
\end{equation}
For example, $R=4\to8$ in FP32 has a ratio greater than one. Because
\eqref{eq:tt_storage_ratio} omits boundary-core effects, it is an asymptotic
guide; for a uniform three-core TT the exact ratio is
\[
  \frac{b_{\mathrm{low}}}{64}
  \frac{(R+\delta)(R+\delta+2)}{R(R+2)}.
\]
All reported ratios use the exact core dimensions. Large-$\delta$ cases probe
the certification boundary and are practical only when the exact ratio is below
one.

\section{Tensor-Train Experiments} \label{sec:tt_experiments}

\subsection{A Posteriori Diagnostic Algorithm}
\label{subsec:tt-diagnostic}

\Cref{alg:tt_compensation} applies the certificate to a uniform rank increment;
capped schedules use the general vector \(R^+\succeq R\). The algorithm is an
a posteriori acceptance test for a computed rank-augmented representation.

\begin{algorithm}[htbp]
\caption{A posteriori rank-compensated TT check}
\label{alg:tt_compensation}
\begin{algorithmic}[1]
\REQUIRE FP64 tensor $T$, target base ranks $R$, increment $\delta$, precision $b_{\mathrm{low}}$
\STATE Compute baseline FP64 TT approximation $T_R$ via TT-SVD.
\STATE Compute augmented FP64 TT approximation $T_{R+\delta}$ via TT-SVD.
\STATE Left-orthogonalize the cores of $T_{R+\delta}$ to stabilize the gauge.
\STATE Round the orthogonalized cores to $b_{\mathrm{low}}$ to form $\widehat T_{R+\delta}$.
\STATE Measure $E_R = \|T - T_R\|_F$, $E_{R+\delta}=\|T-T_{R+\delta}\|_F$, and $S_{\mathrm{TT}} = E_R^2-E_{R+\delta}^2$.
\STATE Measure $\eta_{\mathrm{TT}} = \|T_{R+\delta} - \widehat T_{R+\delta}\|_F$.
\IF{$E_{R+\delta}+\eta_{\mathrm{TT}} \le E_R$ and storage ratio $< 1$}
    \RETURN $\widehat T_{R+\delta}$ (Certified non-increase and memory win)
\ELSIF{$E_{R+\delta}+\eta_{\mathrm{TT}} \le E_R$}
    \RETURN $\widehat T_{R+\delta}$ (Certified accuracy non-increase only)
\ELSE
    \RETURN $T_R$ (Fallback to FP64 baseline)
\ENDIF
\end{algorithmic}
\end{algorithm}

\subsection{Three-Way Synthetic Experiments}
\label{subsec:tt-three-way}

\begin{sloppypar}
We first consider two dense three-way tensors with different truncation
behavior. The \texttt{hilbert\_3d} tensor has size $100\times100\times100$ and
entries $T(i,j,k)=(i+j+k-2)^{-1}$. The \texttt{decay\_3d} tensor has size
$60\times60\times60$ and entries
$T(i,j,k)=(1+|i-j|+|j-k|+|i-k|)^{-1}$.
\end{sloppypar}

We applied standard TT-SVD to each dense tensor and reconstructed the resulting TT tensors in FP64 for error measurement across a schedule of base ranks ($R \in \{4, 8, 16\}$). We then augmented the ranks uniformly by increments ($\delta \in \{1, 4\}$). To control $\eta_{\mathrm{TT}}$, the augmented cores were explicitly left-orthogonalized before being rounded to FP32 and FP16. Memory costs were calculated exactly using core dimensions, and errors were measured in the relative Frobenius norm. The realized truncation gain $S_{\mathrm{TT}}$ was obtained from the measured FP64 reconstruction errors, and $S_{\mathrm{diag}}^{(\delta)}$ was computed from positions $R_j+1$ through $R_j+\delta$ of the local spectra encountered in the augmented TT-SVD run.

Among the 12 left-gauge trials per precision in \cref{tab:tt_summary}, FP32 and
FP16 yield 6 and 8 practical wins, respectively. Failures occur when the
rounding perturbation is comparable to the truncation error.

Across the 24 left-gauge trials, no certified row exceeds the baseline error
beyond numerical tolerance. Certification concerns accuracy only; a practical win also requires
a strict error reduction and a storage ratio below one. For
\texttt{hilbert\_3d}, the rank-8 and rank-16 baselines are already extremely
accurate ($E_{16}\approx2.25\times10^{-12}$), while the FP16 perturbation is
about $3.44\times10^{-4}$. The condition fails and the rounded model loses.
The same behavior occurs for FP32 once the FP64 error falls below the FP32
perturbation scale.

\begin{table}[!htbp]
\centering
\caption{Three-way synthetic TT results in the left-orthogonal gauge. An accuracy-and-memory win strictly reduces both error and storage relative to the baseline.}
\label{tab:tt_summary}
\resizebox{\textwidth}{!}{%
\begin{tabular}{llrrrrrr}
\toprule
Dataset & Precision & Trials & Acc. Wins & Mem. Wins & Acc.+Mem. Wins & Certified & Certified, no strict win \\
\midrule
\texttt{hilbert\_3d} & FP32 & 6 & 4 & 4 & 2 & 4 & 0 \\
\texttt{hilbert\_3d} & FP16 & 6 & 2 & 6 & 2 & 2 & 0 \\
\texttt{decay\_3d} & FP32 & 6 & 6 & 4 & 4 & 6 & 0 \\
\texttt{decay\_3d} & FP16 & 6 & 6 & 6 & 6 & 6 & 0 \\
\bottomrule
\end{tabular}}
\end{table}

The individual three-way configurations are reported in
\cref{tab:tt_three_way_detail}.
\begin{table}[!htbp]
\centering
\caption{Detailed three-way synthetic TT rank-compensation results. Baseline is FP64 rank-$R$. The augmented rank-$R^+$ model is rounded to the listed precision. $\eta$ is the low-precision perturbation. Metrics are scaled by $\|T\|_F$.}
\label{tab:tt_three_way_detail}
\resizebox{\textwidth}{!}{%
\begin{tabular}{lrrlrrrrrl}
\toprule
Dataset & $R$ & $R^+$ & Prec. & $E_R$ (Base) & New Error & $\eta$ & Err. Ratio & Mem. Ratio & Cert.? \\
\midrule
decay\_3d & 4 & 5 & FP16 & 5.52e-01 & 5.17e-01 & 3.20e-04 & \textbf{0.94} & \textbf{0.36} & Yes \\
decay\_3d & 4 & 8 & FP16 & 5.52e-01 & 4.45e-01 & 3.37e-04 & \textbf{0.81} & \textbf{0.83} & Yes \\
decay\_3d & 8 & 9 & FP16 & 4.45e-01 & 4.27e-01 & 3.28e-04 & \textbf{0.96} & \textbf{0.31} & Yes \\
decay\_3d & 8 & 12 & FP16 & 4.45e-01 & 3.85e-01 & 3.40e-04 & \textbf{0.86} & \textbf{0.53} & Yes \\
decay\_3d & 16 & 17 & FP16 & 3.43e-01 & 3.35e-01 & 3.51e-04 & \textbf{0.97} & \textbf{0.28} & Yes \\
decay\_3d & 16 & 20 & FP16 & 3.43e-01 & 3.11e-01 & 3.66e-04 & \textbf{0.91} & \textbf{0.38} & Yes \\
hilbert\_3d & 4 & 5 & FP16 & 6.15e-03 & 1.39e-03 & 3.45e-04 & \textbf{0.23} & \textbf{0.36} & Yes \\
hilbert\_3d & 4 & 8 & FP16 & 6.15e-03 & 3.45e-04 & 3.44e-04 & \textbf{0.06} & \textbf{0.83} & Yes \\
hilbert\_3d & 8 & 9 & FP16 & 9.38e-06 & 3.44e-04 & 3.44e-04 & 36.73 & \textbf{0.31} & No \\
hilbert\_3d & 8 & 12 & FP16 & 9.38e-06 & 3.44e-04 & 3.44e-04 & 36.73 & \textbf{0.53} & No \\
hilbert\_3d & 16 & 17 & FP16 & 2.25e-12 & 3.44e-04 & 3.44e-04 & $>1000$ & \textbf{0.28} & No \\
hilbert\_3d & 16 & 20 & FP16 & 2.25e-12 & 3.44e-04 & 3.44e-04 & $>1000$ & \textbf{0.38} & No \\
decay\_3d & 4 & 5 & FP32 & 5.52e-01 & 5.17e-01 & 3.97e-08 & \textbf{0.94} & \textbf{0.73} & Yes \\
decay\_3d & 4 & 8 & FP32 & 5.52e-01 & 4.45e-01 & 4.24e-08 & \textbf{0.81} & 1.67 & Yes \\
decay\_3d & 8 & 9 & FP32 & 4.45e-01 & 4.27e-01 & 4.05e-08 & \textbf{0.96} & \textbf{0.62} & Yes \\
decay\_3d & 8 & 12 & FP32 & 4.45e-01 & 3.85e-01 & 4.31e-08 & \textbf{0.86} & 1.05 & Yes \\
decay\_3d & 16 & 17 & FP32 & 3.43e-01 & 3.35e-01 & 4.21e-08 & \textbf{0.97} & \textbf{0.56} & Yes \\
decay\_3d & 16 & 20 & FP32 & 3.43e-01 & 3.11e-01 & 4.23e-08 & \textbf{0.91} & \textbf{0.76} & Yes \\
hilbert\_3d & 4 & 5 & FP32 & 6.15e-03 & 1.34e-03 & 4.36e-08 & \textbf{0.22} & \textbf{0.73} & Yes \\
hilbert\_3d & 4 & 8 & FP32 & 6.15e-03 & 9.38e-06 & 4.10e-08 & \textbf{0.00} & 1.67 & Yes \\
hilbert\_3d & 8 & 9 & FP32 & 9.38e-06 & 1.61e-06 & 4.10e-08 & \textbf{0.17} & \textbf{0.62} & Yes \\
hilbert\_3d & 8 & 12 & FP32 & 9.38e-06 & 4.15e-08 & 4.10e-08 & \textbf{0.00} & 1.05 & Yes \\
hilbert\_3d & 16 & 17 & FP32 & 2.25e-12 & 4.10e-08 & 4.10e-08 & $>1000$ & \textbf{0.56} & No \\
hilbert\_3d & 16 & 20 & FP32 & 2.25e-12 & 4.10e-08 & 4.10e-08 & $>1000$ & \textbf{0.76} & No \\
\bottomrule
\end{tabular}}
\end{table}

\FloatBarrier

The additive diagnostic does not reproduce the realized TT truncation gain in
these examples. $S_{\mathrm{diag}}^{(\delta)}$ overestimated the realized gain
$S_{\mathrm{TT}}$. Over the left-gauge trials with $S_{\mathrm{TT}}>0$, the
mean and median values of
$S_{\mathrm{diag}}^{(\delta)}/S_{\mathrm{TT}}$ were 1.218 and 1.211,
respectively, suggesting that local singular-value diagnostics are not reliable
additive predictors of the global TT rank-augmentation gain. Because TT-SVD
already returns left-orthogonal cores, the additional orthogonalization did not
change these perturbations.

\subsection{Six-Way Synthetic Experiments}
\label{subsec:tt-six-way}

We applied the same procedure to two six-way tensors of shape \(12^6\):
\texttt{hilbert\_6d}, with entries
\(1/(i_1+\cdots+i_6-5)\), and \texttt{decay\_6d}, with entries
\((1+\sum_{p<q}|i_p-i_q|)^{-1}\). The rank schedule used
\(R\in\{4,8\}\) and \(\delta\in\{1,4\}\). The results in
\Cref{tab:synthetic_tt_d6_summary} show 4 out of
8 FP32 practical wins and 6 out of 8 FP16 practical wins; no certified row has
error above the baseline beyond tolerance. The two FP16 losses occur for \texttt{hilbert\_6d}
at \(R=8\), where the FP64 baseline error is below the FP16 rounded-core
perturbation. For these six-way trials, the mean and median values of
\(S_{\mathrm{diag}}^{(\delta)}/S_{\mathrm{TT}}\) are 1.890 and 1.752,
respectively, compared with 1.218 and 1.211 for the three-way synthetic
trials. The additive diagnostic therefore agrees less closely with the realized
gain for these six-way cases. However, the three-way and six-way test sets also
differ in tensor definitions, mode sizes, and rank schedules. This comparison
does not isolate tensor order and cannot attribute the larger ratios to order
alone.

\begin{table}[!htbp]
\centering
\caption{Six-way controlled synthetic TT rank-compensation summary.}
\label{tab:synthetic_tt_d6_summary}
\resizebox{\textwidth}{!}{%
\begin{tabular}{llrrrrrr}
\toprule
Dataset & Precision & Trials & Acc. Wins & Mem. Wins & Acc.+Mem. Wins & Certified & Certified, no strict win \\
\midrule
hilbert\_6d & FP32 & 4 & 4 & 2 & 2 & 4 & 0 \\
hilbert\_6d & FP16 & 4 & 2 & 4 & 2 & 2 & 0 \\
decay\_6d & FP32 & 4 & 4 & 2 & 2 & 4 & 0 \\
decay\_6d & FP16 & 4 & 4 & 4 & 4 & 4 & 0 \\
\bottomrule
\end{tabular}}
\end{table}

The corresponding row-level results are given in
\cref{tab:synthetic_tt_d6_detail}.
\begin{table}[!htbp]
\centering
\caption{Six-way synthetic TT rank-compensation results. Baseline is FP64 rank-$R$. The augmented rank-$R^+$ model is rounded to the listed precision. Metrics are scaled by $\|T\|_F$.}
\label{tab:synthetic_tt_d6_detail}
\resizebox{\textwidth}{!}{%
\begin{tabular}{lrrlrrrrrl}
\toprule
Dataset & $R$ & $R^+$ & Prec. & $E_R$ & New Error & $\eta$ & Err. Ratio & Mem. Ratio & Cert. \\
\midrule
hilbert\_6d & 4 & 5 & FP32 & 7.03e-04 & 9.39e-05 & 6.08e-08 & 0.13 & 0.76 & Yes \\
hilbert\_6d & 4 & 5 & FP16 & 7.03e-04 & 5.52e-04 & 5.44e-04 & 0.78 & 0.38 & Yes \\
hilbert\_6d & 4 & 8 & FP32 & 7.03e-04 & 1.14e-07 & 6.01e-08 & 0.00 & 1.89 & Yes \\
hilbert\_6d & 4 & 8 & FP16 & 7.03e-04 & 5.44e-04 & 5.44e-04 & 0.77 & 0.94 & Yes \\
hilbert\_6d & 8 & 9 & FP32 & 9.73e-08 & 6.06e-08 & 6.01e-08 & 0.62 & 0.63 & Yes \\
hilbert\_6d & 8 & 9 & FP16 & 9.73e-08 & 5.44e-04 & 5.44e-04 & 5585.48 & 0.31 & No \\
hilbert\_6d & 8 & 12 & FP32 & 9.73e-08 & 6.01e-08 & 6.01e-08 & 0.62 & 1.10 & Yes \\
hilbert\_6d & 8 & 12 & FP16 & 9.73e-08 & 5.44e-04 & 5.44e-04 & 5585.48 & 0.55 & No \\
decay\_6d & 4 & 5 & FP32 & 1.31e-01 & 1.16e-01 & 5.48e-08 & 0.89 & 0.76 & Yes \\
decay\_6d & 4 & 5 & FP16 & 1.31e-01 & 1.16e-01 & 5.24e-04 & 0.89 & 0.38 & Yes \\
decay\_6d & 4 & 8 & FP32 & 1.31e-01 & 1.01e-01 & 7.37e-08 & 0.77 & 1.89 & Yes \\
decay\_6d & 4 & 8 & FP16 & 1.31e-01 & 1.01e-01 & 5.88e-04 & 0.77 & 0.94 & Yes \\
decay\_6d & 8 & 9 & FP32 & 1.01e-01 & 9.62e-02 & 7.92e-08 & 0.95 & 0.63 & Yes \\
decay\_6d & 8 & 9 & FP16 & 1.01e-01 & 9.62e-02 & 6.36e-04 & 0.95 & 0.31 & Yes \\
decay\_6d & 8 & 12 & FP32 & 1.01e-01 & 7.94e-02 & 7.22e-08 & 0.79 & 1.10 & Yes \\
decay\_6d & 8 & 12 & FP16 & 1.01e-01 & 7.94e-02 & 4.94e-04 & 0.79 & 0.55 & Yes \\
\bottomrule
\end{tabular}}
\end{table}

\FloatBarrier

\subsection{Public Hyperspectral and FROSTT Benchmarks}
\label{subsec:tt-public}

The dense benchmarks are Indian Pines corrected
\((145\times145\times200)\), Salinas-A corrected
\((83\times86\times204)\), Salinas corrected
\((512\times217\times204)\), and Pavia University
\((610\times340\times103)\) after exclusion of no-information samples, all from
the GIC repository~\cite{GICHyperspectralScenes}. The sparse benchmarks are
dense subtensors of the FROSTT \texttt{uber-pickups} and
\texttt{chicago-crime} tensors~\cite{FROSTT}.

For each FROSTT tensor, we selected the most active indices in each mode by
marginal nonzero count and formed the induced dense subtensor. Every public tensor was converted to FP64
and normalized by its Frobenius norm before TT-SVD, so the reported errors are
relative Frobenius errors.

The public experiments used the same measured quantities as the controlled tests. For the smaller hyperspectral cubes we used \(R\in\{2,4,8,16,32\}\) and \(\delta\in\{1,2,4\}\). For the larger hyperspectral cubes and FROSTT subtensors we used \(R\in\{4,8,16\}\) and \(\delta\in\{1,4\}\). For the Salinas-A tail-rank stress test, we used nominal ranks \(R_{\rm nom}\in\{48,64,80,96,128,160\}\) with \(\delta=1\), capping each TT rank by the corresponding valid unfolding dimension. Thus, at high nominal ranks the actual rank vector is not uniform; for example, \(R_{\rm nom}=160\) gives \(R=(83,160)\) and \(R^+=(83,161)\).

Across the 60 public trials per precision in
\cref{tab:public_tensor_summary}, FP32 improves accuracy in every case and gives
44 practical wins; FP16 improves accuracy in 56 cases and gives 54 practical wins. Its four
losses occur in the Salinas-A tail-rank test, where \(E_R\) falls to
\(6.23\times10^{-4}\)--\(1.54\times10^{-3}\) while
\(\eta_{\mathrm{TT}}\approx3.20\times10^{-4}\). One uncertified FP16 case still
improves accuracy, consistent with the condition being sufficient but not necessary.

In \cref{fig:public_eta_er}, the four failed FP16 Salinas-A cases occur where
the measured perturbation is no longer negligible relative to the FP64
truncation error.

\begin{figure}[!htbp]
  \centering
  \includegraphics[width=0.68\linewidth]{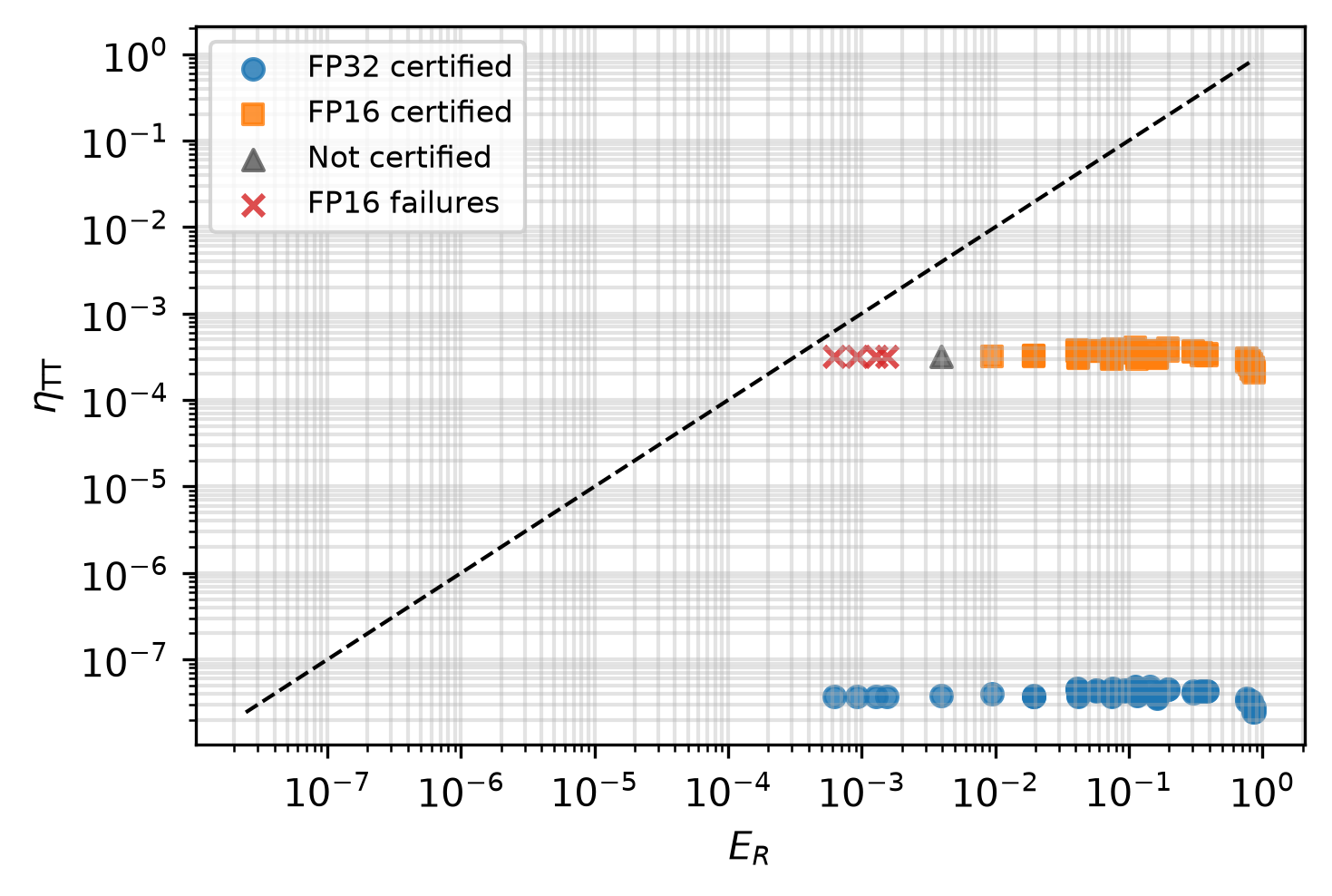}
  \caption{Public tensor certification behavior. Each point compares the
  storage-quantization perturbation \(\eta_{\mathrm{TT}}\) with the FP64
  baseline error \(E_R\); the dashed line \(\eta_{\mathrm{TT}}=E_R\) is a
  necessary condition for \eqref{eq:tt_condition_squared}. The categories
  contain 60 certified FP32 rows, 55 certified FP16 rows, one uncertified FP16
  accuracy win, and four uncertified FP16 losses.}
  \label{fig:public_eta_er}
\end{figure}

For example, the Salinas-A \(R_{\rm nom}=2\) to \(R^+=(6,6)\) FP16 row is
certified and reduces error, but its memory ratio is \(1.31\). The generalized
diagnostic \(S_{\mathrm{diag}}^{(R,R^+)}\) remains close to
\(S_{\mathrm{TT}}\) for the dense hyperspectral cubes but overestimates the
realized gain on the FROSTT subtensors, with mean ratios about \(2.45\) for
\texttt{uber-pickups} and \(3.06\) for \texttt{chicago-crime}.

\begin{table}[!htbp]
\centering
\caption{Public tensor benchmark summary. Trials are reported per precision, and P.W. denotes practical storage-and-accuracy wins.}
\label{tab:public_tensor_summary}
\small
\begin{tabular}{lcrrr}
\toprule
Dataset & Trials & $R_{\rm nom}$ & FP32 P.W. & FP16 P.W. \\
\midrule
FROSTT crime & 6 & {4,8,16} & 4 & 6 \\
FROSTT uber & 6 & {4,8,16} & 4 & 6 \\
Indian Pines & 15 & {2,4,8,16,32} & 11 & 14 \\
PaviaU & 6 & {4,8,16} & 4 & 6 \\
Salinas-A & 15 & {2,4,8,16,32} & 11 & 14 \\
Salinas-A tail & 6 & {48,64,80,96,128,160} & 6 & 2 \\
Salinas & 6 & {4,8,16} & 4 & 6 \\
\bottomrule
\end{tabular}
\end{table}

Selected errors and outcomes, the certification confusion matrix, and all four
FP16 failures are reported in
\cref{tab:public_tensor_detail_errors,tab:public_tensor_detail_outcomes,tab:public_tensor_confusion,tab:public_tensor_failures}.
\begin{table}[!htbp]
\centering
\caption{Selected public tensor rank-compensation errors. The baseline is the FP64 representation at the actual rank $R$.}
\label{tab:public_tensor_detail_errors}
\small
\setlength{\tabcolsep}{3pt}
\begin{tabular}{lllrrrr}
\toprule
Dataset & $R\to R^+$ & Prec. & $E_R$ & New error & $\eta$ & Err. ratio \\
\midrule
Indian Pines & $(2,2)$\(\to\)$(3,3)$ & FP16 & 1.11e-01 & 9.96e-02 & 3.69e-04 & 0.90 \\
Salinas-A & $(2,2)$\(\to\)$(6,6)$ & FP16 & 1.63e-01 & 8.96e-02 & 3.14e-04 & 0.55 \\
Salinas & $(4,4)$\(\to\)$(8,8)$ & FP16 & 1.97e-01 & 1.44e-01 & 3.97e-04 & 0.73 \\
PaviaU & $(4,4)$\(\to\)$(8,8)$ & FP16 & 3.84e-01 & 3.50e-01 & 3.47e-04 & 0.91 \\
FROSTT uber & $(4,4,4)$\(\to\)$(8,8,8)$ & FP16 & 8.27e-01 & 7.96e-01 & 2.64e-04 & 0.96 \\
FROSTT crime & $(4,4,4)$\(\to\)$(8,8,8)$ & FP16 & 8.70e-01 & 8.50e-01 & 2.17e-04 & 0.98 \\
Salinas-A tail & $(64,64)$\(\to\)$(65,65)$ & FP16 & 3.96e-03 & 3.74e-03 & 3.20e-04 & 0.94 \\
Salinas-A tail & $(80,80)$\(\to\)$(81,81)$ & FP16 & 1.54e-03 & 1.56e-03 & 3.20e-04 & 1.01 \\
Salinas-A tail & $(83,160)$\(\to\)$(83,161)$ & FP16 & 6.23e-04 & 6.91e-04 & 3.20e-04 & 1.11 \\
Salinas-A tail & $(83,160)$\(\to\)$(83,161)$ & FP32 & 6.23e-04 & 6.13e-04 & 3.73e-08 & 0.98 \\
\bottomrule
\end{tabular}
\end{table}

\begin{table}[!htbp]
\centering
\caption{Storage and certification outcomes for the selected public tensor cases in \cref{tab:public_tensor_detail_errors}.}
\label{tab:public_tensor_detail_outcomes}
\small
\begin{tabular}{llrrrr}
\toprule
Dataset & Prec. & $R_{\rm nom}$ & Err. ratio & Mem. ratio & Cert./Prac. \\
\midrule
Indian Pines & FP16 & 2 & 0.90 & 0.46 & Yes/Yes \\
Salinas-A & FP16 & 2 & 0.55 & 1.31 & Yes/No \\
Salinas & FP16 & 4 & 0.73 & 0.77 & Yes/Yes \\
PaviaU & FP16 & 4 & 0.91 & 0.83 & Yes/Yes \\
FROSTT uber & FP16 & 4 & 0.96 & 0.87 & Yes/Yes \\
FROSTT crime & FP16 & 4 & 0.98 & 0.89 & Yes/Yes \\
Salinas-A tail & FP16 & 64 & 0.94 & 0.26 & No/Yes \\
Salinas-A tail & FP16 & 80 & 1.01 & 0.26 & No/No \\
Salinas-A tail & FP16 & 160 & 1.11 & 0.25 & No/No \\
Salinas-A tail & FP32 & 160 & 0.98 & 0.50 & Yes/Yes \\
\bottomrule
\end{tabular}
\end{table}

\begin{table}[!htbp]
\centering
\caption{Public tensor certification outcomes. A win is a strict accuracy improvement over the FP64 baseline; certification refers to the sufficient non-increase condition.}
\label{tab:public_tensor_confusion}
\resizebox{0.9\textwidth}{!}{%
\begin{tabular}{lrrrrr}
\toprule
Precision & Trials & Cert.+Win & Cert.+Non-win & Not Cert.+Win & Not Cert.+Non-win \\
\midrule
FP16 & 60 & 55 & 0 & 1 & 4 \\
FP32 & 60 & 60 & 0 & 0 & 0 \\
\bottomrule
\end{tabular}}
\end{table}

\begin{table}[!htbp]
\centering
\caption{The four FP16 public tensor failure cases. In each case, the measured truncation gain is below the certification threshold.}
\label{tab:public_tensor_failures}
\small
\setlength{\tabcolsep}{3pt}
\begin{tabular}{llllrrrr}
\toprule
Dataset & $R_{\rm nom}$ & $R$ actual & $R^+$ actual & $E_R$ & $\eta$ & New error & Err. ratio \\
\midrule
Salinas-A tail & 160 & $(83,160)$ & $(83,161)$ & 6.23e-04 & 3.20e-04 & 6.91e-04 & 1.11 \\
Salinas-A tail & 128 & $(83,128)$ & $(83,129)$ & 9.22e-04 & 3.20e-04 & 9.67e-04 & 1.05 \\
Salinas-A tail & 96 & $(83,96)$ & $(83,97)$ & 1.28e-03 & 3.20e-04 & 1.30e-03 & 1.02 \\
Salinas-A tail & 80 & $(80,80)$ & $(81,81)$ & 1.54e-03 & 3.20e-04 & 1.56e-03 & 1.01 \\
\bottomrule
\end{tabular}
\end{table}

\FloatBarrier
\newpage

\subsection{Downstream Tensor-Train Reconstruction Performance}
\label{sec:tt-performance}

We measure dense reconstruction from the stored cores under the A100 protocol
in \cref{sec:gpu-protocol}. If
$G^{(\ell)}\in\mathbb{R}^{r_{\ell-1}\times n_\ell\times r_\ell}$,
the kernel reshapes the first core as
$Z_1\in\mathbb{R}^{n_1\times r_1}$ and each later core as
$K_\ell\in\mathbb{R}^{r_{\ell-1}\times(n_\ell r_\ell)}$. It then performs the
sequence
\begin{equation}
  B_\ell=Z_{\ell-1}K_\ell,
  \qquad
  Z_\ell=\operatorname{reshape}\!\left(
    B_\ell,\left(\prod_{j=1}^{\ell}n_j\right)\times r_\ell
  \right),
  \quad \ell=2,\ldots,d,
  \label{eq:gpu_tt_reconstruction}
\end{equation}
using preallocated output buffers. Since $r_d=1$, the final buffer is the
vectorized dense tensor.

The public timing grid contains the four complete hyperspectral cubes (Indian
Pines corrected, Salinas-A corrected, Salinas corrected, and Pavia University)
with base and augmented rank schedules $32\to36$, and the two top-active
FROSTT subtensors (Uber pickups and Chicago crime) with schedules $16\to20$.
Two six-way synthetic stress tests with schedules $8\to12$ are excluded from
the six-public-tensor aggregate.
For each public tensor, the FP64 base-rank reconstruction is compared with
same-rank FP32 and FP16 controls and rank-compensated FP32 and FP16
reconstructions. Storage and error are evaluated for the same cores used in the
timings.

\begin{table}[!htbp]
  \centering
  \caption{NVIDIA A100 resident TT-reconstruction timings for the six
  public tensors, using ranks $32\to36$ for the four complete hyperspectral
  cubes and $16\to20$ for the two FROSTT subtensors. Time is the geometric mean of per-task
  medians, and speedups use paired block medians; 95\% intervals quantify
  variation across tensors. Storage and approximation error are ratios to the
  matching FP64 base-rank representation, with error evaluated by FP64
  contraction of the stored cores.}
  \label{tab:tt-speed}
  \small
  \begin{tabular}{lrrrr}
\toprule
Method & Time (ms) & Speedup [95\% CI] & Storage & Error \\
\midrule
FP64 baseline & 0.085 & 1.00 [1.00, 1.00] & 1.0000 & 1.000 \\
FP32 same rank & 0.053 & 1.61 [1.38, 1.83] & 0.5000 & 1.000 \\
FP16 same rank & 0.044 & 1.93 [1.40, 2.64] & 0.2500 & 1.000 \\
FP32 rank-comp. & 0.062 & 1.38 [1.26, 1.49] & 0.6715 & 0.938 \\
FP16 rank-comp. & 0.044 & 1.94 [1.41, 2.61] & 0.3358 & 0.938 \\
\bottomrule
\end{tabular}

\end{table}

The rank-compensated FP32 and FP16 reconstructions in
\cref{tab:tt-speed,fig:tt-reconstruction-speedup} attain geometric-mean speedups of
$1.38\times$ and $1.94\times$, respectively. Their geometric-mean storage
ratios are $0.672$ and $0.336$, and their geometric-mean approximation-error
ratio is $0.938$. The same-rank FP32 and FP16 controls attain $1.61\times$ and
$1.93\times$, respectively. At the dataset level, rank-compensated FP16
speedups range from $1.04\times$ on Salinas-A to $3.12\times$ on Salinas,
while the FP32 range is $1.14\times$ to $1.54\times$.

\begin{figure}[!htbp]
  \centering
  \includegraphics[width=0.82\linewidth]{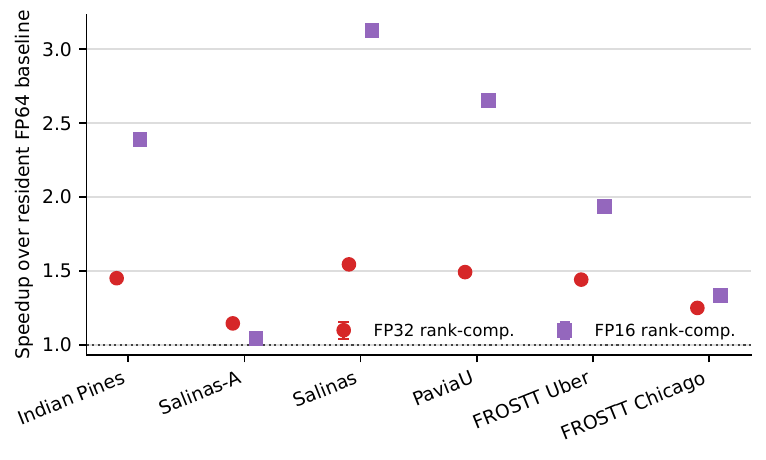}
  \caption{Per-dataset NVIDIA A100 resident TT-reconstruction speedup for the
  rank-compensated methods on the six public tensors, using ranks $32\to36$ for
  the hyperspectral cubes and $16\to20$ for the FROSTT subtensors. Error bars are 95\%
  bootstrap intervals over the 15 paired block ratios; several are smaller
  than the markers.}
  \label{fig:tt-reconstruction-speedup}
\end{figure}

These timings cover reconstruction from precomputed cores on the measured A100.
They exclude loading, TT-SVD, rank selection, orthogonalization, quantization,
allocation, and transfers.

\subsection{Limitations}
The public study contains four complete hyperspectral cubes and two dense
FROSTT subtensors; it does not evaluate the full sparse FROSTT tensors. The
six-way tests are synthetic with mode size 12. Evaluation of the full sparse
tensors at their original scale would require sparse or randomized TT methods.
The accuracy experiments use dense TT-SVD and FP64 reconstruction.

\section{Fixed-Memory Hyperspectral Compression} \label{sec:hyperspectral_application}

\begin{sloppypar}
We compare standard TT baselines under fixed storage budgets on the four
hyperspectral tensors from \cref{sec:tt_experiments}; specialized
hyperspectral codecs are outside the scope of this experiment. Reconstruction
quality is evaluated after undoing the Frobenius normalization. We report
relative Frobenius error, peak signal-to-noise ratio (PSNR), a global per-band
structural-similarity (SSIM) variant based on Wang et
al.~\cite{WangBovikSheikhSimoncelli2004}, and spectral angle mapper
(SAM)~\cite{KruseEtAl1993SIPS}. PSNR and the SSIM variant are averaged over
spectral bands, and SAM over nonzero pixel spectra.

For each reference band \(b\), let
\(L_b=\max(X_b)-\min(X_b)\), with
\(L_b=\max(\max|X_b|,1)\) for a constant band. We average
\(\mathrm{PSNR}_b=20\log_{10}(L_b/\sqrt{\mathrm{MSE}_b})\) over bands.
The SSIM variant is evaluated once per complete band using population means,
variances, and covariance with \(C_1=(0.01L_b)^2\) and
\(C_2=(0.03L_b)^2\), and is then averaged over bands. SAM is the mean spectral
angle over pixels for which the product of the reference and reconstructed
spectrum norms is positive.

TT-SVD is performed in FP64 and the resulting cores are rounded before
reconstruction. The comparison therefore concerns representation quality at
fixed memory, not low-precision TT-SVD.

For each dataset we tested \(R\in\{4,8,16,32\}\) and
\(\delta\in\{1,2,4\}\), with TT ranks capped by the corresponding unfolding
dimensions. The compared methods are FP64 TT-SVD at rank \(R\), FP32 and FP16 rounded TT cores at the same rank, FP32 and FP16 rank-compensated cores at rank \(R+\delta\), and a memory-matched FP64 TT-SVD rank chosen as the largest FP64 rank whose exact TT storage does not exceed the low-precision rank-compensated storage. Compression ratios are relative to the dense FP64 tensor.

All 96 rank-compensated representations satisfy the certificate and strictly
reduce error. All 48 FP16 cases and 40 of 48 FP32 cases also reduce storage.
Although every strict improvement in this grid is certified, this empirical
converse is not guaranteed by the theorem. At each budget in
\cref{tab:hyperspectral_fixed_memory}, the FP16 rank-compensated representation
has the lowest relative error among the methods compared.
\end{sloppypar}

The repeated FP64 memory-matched rank \((17,17)\) in
\Cref{tab:hyperspectral_fixed_memory} does not come from one shared byte budget.
For each dataset, the budget is the dataset-specific storage of its FP16
rank-compensated \((36,36)\) representation. The reported, rounded budgets are
\(0.38\), \(0.89\), \(0.23\), and \(0.59\)~MiB for Indian Pines, PaviaU,
Salinas-A, and Salinas, respectively.
Exact TT storage counting makes \((17,17)\) the largest tested FP64 rank vector
within each of those four budgets.

\begin{figure}[!htbp]
  \centering
  \begin{minipage}{0.49\linewidth}
    \centering
    \includegraphics[width=\linewidth]{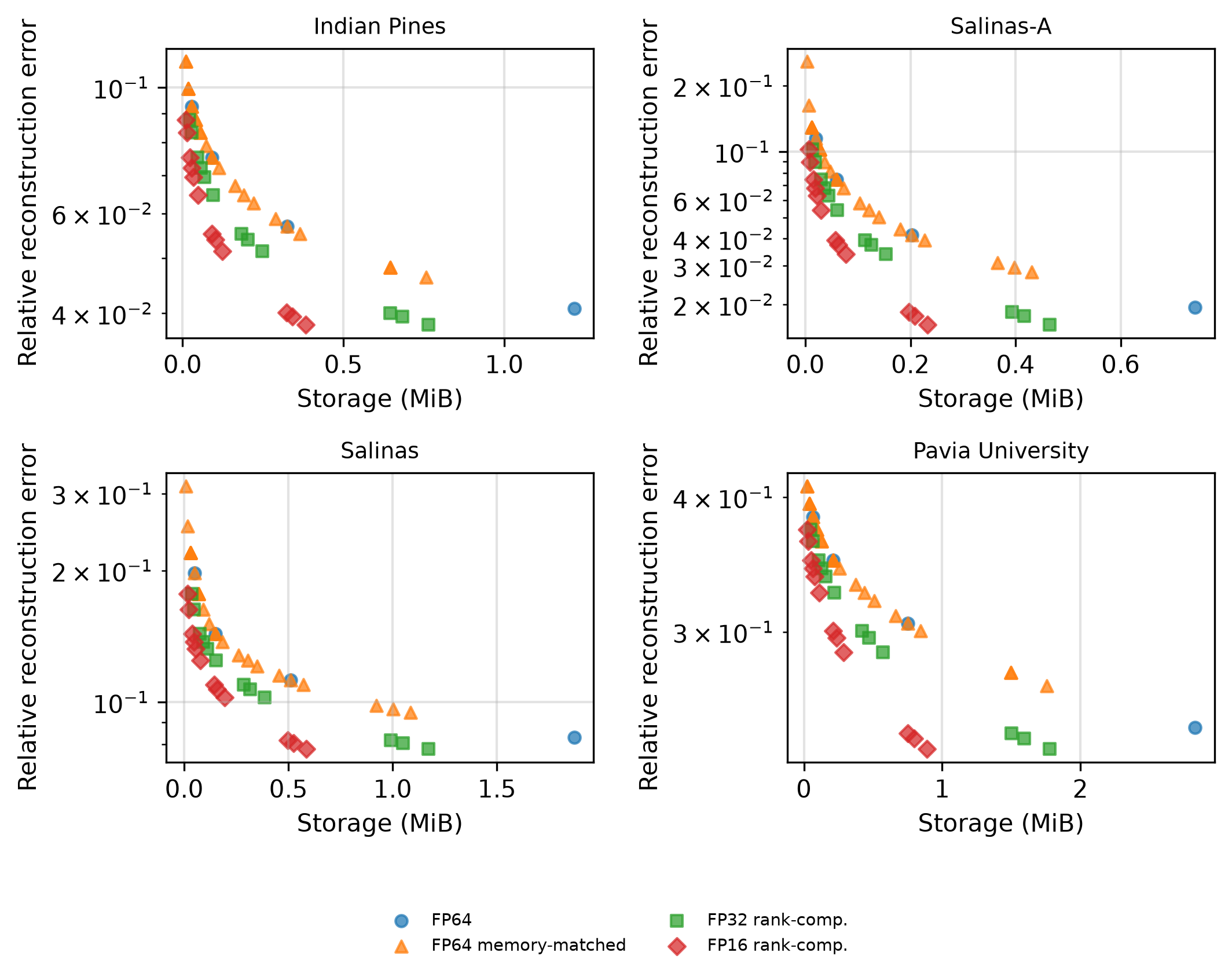}\\[-0.3ex]
    {\small (a) Relative error.}
  \end{minipage}\hfill
  \begin{minipage}{0.49\linewidth}
    \centering
    \includegraphics[width=\linewidth]{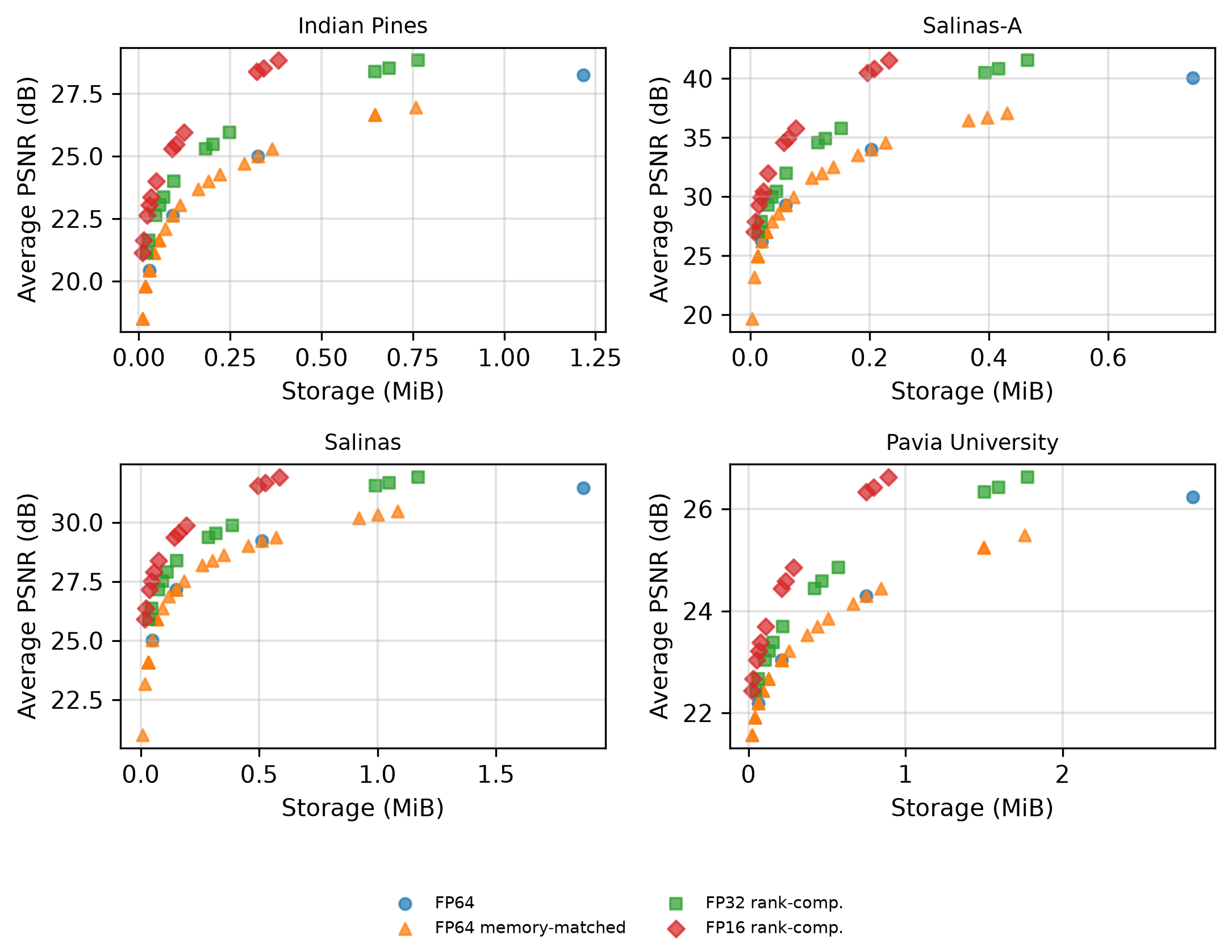}\\[-0.3ex]
    {\small (b) Average PSNR.}
  \end{minipage}
  \caption{Fixed-memory hyperspectral compression trade-off. The panels compare relative reconstruction error and average bandwise PSNR against storage for FP64 baselines, memory-matched FP64 models, and rank-compensated FP32/FP16 TT representations. In the tested grid, the low-precision rank-compensated models occupy lower-error and higher-PSNR regions at comparable or lower storage.}
  \label{fig:application_error_storage}
\end{figure}

The representative metrics in
\cref{tab:hyperspectral_compression_summary} show lower error and SAM and
higher PSNR and SSIM for both rank-compensated precisions on all four datasets;
FP16 uses half the storage of the corresponding FP32 representation.
\begin{table}[!htbp]
\centering
\caption{Representative hyperspectral TT reconstruction quality. The FP64 baseline uses rank $(32,32)$; the rank-compensated FP32 and FP16 models use rank $(36,36)$. PSNR and the global per-band SSIM variant are averaged over spectral bands, and SAM is in degrees.}
\label{tab:hyperspectral_compression_summary}
\resizebox{\textwidth}{!}{%
\begin{tabular}{llrrrrr}
\toprule
Dataset & Method & Storage (MiB) & Rel. error & PSNR & SSIM & SAM \\
\midrule
Indian Pines & FP64 baseline & 1.22 & 4.07e-02 & 28.24 & 0.934 & 1.87 \\
Indian Pines & FP32 rank-comp. & 0.76 & 3.82e-02 & 28.83 & 0.941 & 1.78 \\
Indian Pines & FP16 rank-comp. & 0.38 & 3.82e-02 & 28.83 & 0.941 & 1.78 \\
PaviaU & FP64 baseline & 2.83 & 2.45e-01 & 26.24 & 0.830 & 8.45 \\
PaviaU & FP32 rank-comp. & 1.78 & 2.34e-01 & 26.63 & 0.847 & 8.13 \\
PaviaU & FP16 rank-comp. & 0.89 & 2.34e-01 & 26.63 & 0.847 & 8.13 \\
Salinas-A & FP64 baseline & 0.74 & 1.93e-02 & 40.06 & 0.992 & 0.68 \\
Salinas-A & FP32 rank-comp. & 0.46 & 1.62e-02 & 41.55 & 0.994 & 0.58 \\
Salinas-A & FP16 rank-comp. & 0.23 & 1.62e-02 & 41.55 & 0.994 & 0.58 \\
Salinas & FP64 baseline & 1.87 & 8.30e-02 & 31.45 & 0.954 & 2.45 \\
Salinas & FP32 rank-comp. & 1.17 & 7.81e-02 & 31.92 & 0.959 & 2.35 \\
Salinas & FP16 rank-comp. & 0.59 & 7.81e-02 & 31.92 & 0.959 & 2.35 \\
\bottomrule
\end{tabular}}
\end{table}

\begin{table}[!htbp]
\centering
\caption{Fixed-memory comparison for hyperspectral TT compression. The dataset-specific budget is chosen from the best FP16 rank-compensated row for that dataset, and each competing method uses its best tested representation within that budget.}
\label{tab:hyperspectral_fixed_memory}
\resizebox{\textwidth}{!}{%
\begin{tabular}{lrrrrrrrl}
\toprule
Dataset & Budget (MiB) & FP64 rank & FP64 err. & FP32 rank & FP32 err. & FP16 rank & FP16 err. & Best under budget \\
\midrule
Indian Pines & 0.38 & $(17,17)$ & 5.53e-02 & $(20,20)$ & 5.15e-02 & $(36,36)$ & 3.82e-02 & FP16 rank-comp. \\
PaviaU & 0.89 & $(17,17)$ & 3.01e-01 & $(20,20)$ & 2.87e-01 & $(36,36)$ & 2.34e-01 & FP16 rank-comp. \\
Salinas-A & 0.23 & $(17,17)$ & 3.93e-02 & $(20,20)$ & 3.40e-02 & $(36,36)$ & 1.62e-02 & FP16 rank-comp. \\
Salinas & 0.59 & $(17,17)$ & 1.10e-01 & $(20,20)$ & 1.02e-01 & $(36,36)$ & 7.81e-02 & FP16 rank-comp. \\
\bottomrule
\end{tabular}}
\end{table}

Complete storage, certification, and reconstruction-quality values are reported
in \cref{tab:hyperspectral_storage_detail,tab:hyperspectral_quality_detail}.
\begin{table}[!htbp]
\centering
\caption{Storage and certification results for the hyperspectral TT representations. Comp. is the compression ratio relative to dense FP64 storage. The Prac. column is defined only for rank-compensated rows.}
\label{tab:hyperspectral_storage_detail}
\small
\setlength{\tabcolsep}{3.5pt}
\begin{tabularx}{\textwidth}{@{}l>{\raggedright\arraybackslash}Xllrrll@{}}
\toprule
Dataset & Method & Rank & Prec. & Storage (MiB) & Comp. & Cert. & Prac. \\
\midrule
Indian Pines & FP64 baseline & $(32,32)$ & FP64 & 1.22 & 26 & -- & -- \\
Indian Pines & FP32 same rank & $(32,32)$ & FP32 & 0.61 & 53 & -- & -- \\
Indian Pines & FP16 same rank & $(32,32)$ & FP16 & 0.30 & 105 & -- & -- \\
Indian Pines & FP32 rank-comp. & $(36,36)$ & FP32 & 0.76 & 42 & Yes & Yes \\
Indian Pines & FP16 rank-comp. & $(36,36)$ & FP16 & 0.38 & 84 & Yes & Yes \\
Indian Pines & FP64 mem.-matched & $(17,17)$ & FP64 & 0.36 & 88 & -- & -- \\
PaviaU & FP64 baseline & $(32,32)$ & FP64 & 2.83 & 58 & -- & -- \\
PaviaU & FP32 same rank & $(32,32)$ & FP32 & 1.42 & 115 & -- & -- \\
PaviaU & FP16 same rank & $(32,32)$ & FP16 & 0.71 & 230 & -- & -- \\
PaviaU & FP32 rank-comp. & $(36,36)$ & FP32 & 1.78 & 92 & Yes & Yes \\
PaviaU & FP16 rank-comp. & $(36,36)$ & FP16 & 0.89 & 183 & Yes & Yes \\
PaviaU & FP64 mem.-matched & $(17,17)$ & FP64 & 0.84 & 194 & -- & -- \\
Salinas-A & FP64 baseline & $(32,32)$ & FP64 & 0.74 & 15 & -- & -- \\
Salinas-A & FP32 same rank & $(32,32)$ & FP32 & 0.37 & 30 & -- & -- \\
Salinas-A & FP16 same rank & $(32,32)$ & FP16 & 0.19 & 60 & -- & -- \\
Salinas-A & FP32 rank-comp. & $(36,36)$ & FP32 & 0.46 & 24 & Yes & Yes \\
Salinas-A & FP16 rank-comp. & $(36,36)$ & FP16 & 0.23 & 48 & Yes & Yes \\
Salinas-A & FP64 mem.-matched & $(17,17)$ & FP64 & 0.23 & 49 & -- & -- \\
Salinas & FP64 baseline & $(32,32)$ & FP64 & 1.87 & 92 & -- & -- \\
Salinas & FP32 same rank & $(32,32)$ & FP32 & 0.94 & 185 & -- & -- \\
Salinas & FP16 same rank & $(32,32)$ & FP16 & 0.47 & 370 & -- & -- \\
Salinas & FP32 rank-comp. & $(36,36)$ & FP32 & 1.17 & 148 & Yes & Yes \\
Salinas & FP16 rank-comp. & $(36,36)$ & FP16 & 0.59 & 295 & Yes & Yes \\
Salinas & FP64 mem.-matched & $(17,17)$ & FP64 & 0.57 & 303 & -- & -- \\
\bottomrule
\end{tabularx}
\end{table}

\begin{table}[!htbp]
\centering
\caption{Reconstruction quality for the hyperspectral TT representations in \cref{tab:hyperspectral_storage_detail}. PSNR and the global per-band SSIM variant are averaged over spectral bands; SAM is reported in degrees.}
\label{tab:hyperspectral_quality_detail}
\small
\begin{tabular}{llrrrr}
\toprule
Dataset & Method & Rel. error & PSNR & SSIM & SAM \\
\midrule
Indian Pines & FP64 baseline & 4.07e-02 & 28.24 & 0.934 & 1.87 \\
Indian Pines & FP32 same rank & 4.07e-02 & 28.24 & 0.934 & 1.87 \\
Indian Pines & FP16 same rank & 4.07e-02 & 28.24 & 0.934 & 1.87 \\
Indian Pines & FP32 rank-comp. & 3.82e-02 & 28.83 & 0.941 & 1.78 \\
Indian Pines & FP16 rank-comp. & 3.82e-02 & 28.83 & 0.941 & 1.78 \\
Indian Pines & FP64 mem.-matched & 5.53e-02 & 25.29 & 0.885 & 2.39 \\
PaviaU & FP64 baseline & 2.45e-01 & 26.24 & 0.830 & 8.45 \\
PaviaU & FP32 same rank & 2.45e-01 & 26.24 & 0.830 & 8.45 \\
PaviaU & FP16 same rank & 2.45e-01 & 26.24 & 0.830 & 8.45 \\
PaviaU & FP32 rank-comp. & 2.34e-01 & 26.63 & 0.847 & 8.13 \\
PaviaU & FP16 rank-comp. & 2.34e-01 & 26.63 & 0.847 & 8.13 \\
PaviaU & FP64 mem.-matched & 3.01e-01 & 24.45 & 0.717 & 10.10 \\
Salinas-A & FP64 baseline & 1.93e-02 & 40.06 & 0.992 & 0.68 \\
Salinas-A & FP32 same rank & 1.93e-02 & 40.06 & 0.992 & 0.68 \\
Salinas-A & FP16 same rank & 1.93e-02 & 40.06 & 0.992 & 0.68 \\
Salinas-A & FP32 rank-comp. & 1.62e-02 & 41.55 & 0.994 & 0.58 \\
Salinas-A & FP16 rank-comp. & 1.62e-02 & 41.55 & 0.994 & 0.58 \\
Salinas-A & FP64 mem.-matched & 3.93e-02 & 34.60 & 0.977 & 1.24 \\
Salinas & FP64 baseline & 8.30e-02 & 31.45 & 0.954 & 2.45 \\
Salinas & FP32 same rank & 8.30e-02 & 31.45 & 0.954 & 2.45 \\
Salinas & FP16 same rank & 8.30e-02 & 31.45 & 0.954 & 2.45 \\
Salinas & FP32 rank-comp. & 7.81e-02 & 31.92 & 0.959 & 2.35 \\
Salinas & FP16 rank-comp. & 7.81e-02 & 31.92 & 0.959 & 2.35 \\
Salinas & FP64 mem.-matched & 1.10e-01 & 29.36 & 0.924 & 2.99 \\
\bottomrule
\end{tabular}
\end{table}

\FloatBarrier

\section{Conclusion} \label{sec:conclusion}

For matrices, the certificate guarantees that a lower-precision rank-$(k+1)$
approximation is no worse than the FP64 rank-$k$ baseline when the reduction in
truncation error offsets the storage perturbation. Public SuiteSparse
experiments support this mechanism and show FP16 failures near the perturbation
floor. For resident matrix application at batch 8192, the
rank-compensated FP32 and FP16 factors attain $1.28\times$ and $2.12\times$
geometric-mean speedups on the measured A100, while neither accelerates at
batch 256. Thus the matrix storage--accuracy benefit translates into downstream
speed only when the workload amortizes fixed overhead.

The TT extension preserves the matrix structure but is conditional and a
posteriori: the realized TT truncation gain and rounded-core perturbation must be
measured for the computed representation. The three-way and six-way synthetic
tests and the public hyperspectral and FROSTT tests support this diagnostic,
while also showing the FP16 perturbation-floor failures and the need for a
separate memory check. Reconstruction of the six public tensors gives
$1.38\times$ and $1.94\times$ geometric-mean speedups for rank-compensated FP32
and FP16, respectively, on the same A100 configuration. These measurements do
not establish low-precision TT-SVD acceleration.

Under the tested fixed-memory budgets, the rank-compensated hyperspectral models
improve reconstruction quality. The evidence is limited to the reported
matrices, synthetic tensors, four hyperspectral cubes, two FROSTT subtensors,
and one GPU configuration. Future work includes full sparse tensors, additional
hardware, and end-to-end low-precision decomposition.

\FloatBarrier

\section*{Data and Code Availability}

The source code, configurations, and processed results are available in the
\href{https://github.com/ceciver/rank-compensation-repro}{GitHub repository} and
archived as version 1.0.0 on Zenodo~\cite{ElMountasserJbilou2026Code}. The
public datasets are available from the sources cited above.

\bibliographystyle{siamplain}
\bibliography{references}

\end{document}